\newcommand{\rom}[1]{{\color{blue}[#1]}} 
\renewcommand{\rom}[1]{} 
\newcommand{\ddd}{D}
\renewcommand{\ge}{\geqslant}
\newcommand{\Z}{\mathbb Z}
\newtheorem{theorem}{Theorem}
\newtheorem{corollary}[theorem]{Corollary}
\newtheorem{lemma}{Lemma}
\newtheorem{claim}{Claim}
\theoremstyle{remark}
\newtheorem{definition}{Definition}
\newtheorem{remark}{Remark}
\title{A family of non-periodic tilings of the plane by right golden triangles}
\author{Nikolay Vereshchagin\thanks{The article is supported by Russian Science Foundation (20-11-20203).}\\
HSE University, Russian Federation.}
\begin{document}

\maketitle
\begin{abstract}
We study a family of substitution tilings with similar right triangles of two sizes
which is obtained using the substitution rule introduced
in [Danzer, L. and van Ophuysen, G.
A species of planar triangular tilings with inflation factor $\sqrt{-\tau}$.
Res. Bull. Panjab Univ. Sci. 2000, 50, 1-4, pp. 137--175 (2001)]. In that  paper,
it is proved  this family of tilings can be obtained from a local
rule using decorated tiles. That is, that this family is \emph{sofic}.

In the present paper, we provide an alternative proof of this fact. 
We use more decorated tiles than Danzer and  van Ophuysen (22 in place of 10).
However, our  decoration of supertiles is more intuitive and our local rule is simpler.
  \end{abstract}  

\rom{the abstract is quite long. Maybe it's a matter of taste, but I like abstracts that give in a short paragraph the major results of the paper, thanks to which I can quickly find out where is proven what in my database...}
\section{Introduction}
\rom{My main major comment concerns the structure of the paper. In the current version I
would call it “unconventional” because the paper starts immediately from discussion
of the single family of tilings that is discussed in the paper without introducing a
general context and/or framework. While there is nothing wrong with such approach,
I think that the paper will benefit from a short overview of substitution tilings and
particularly some review of known results or examples of sofic tilings. In the current
version, and only subsection titled “Previous results” discusses these property, but in
my opinion it does not give any context for the current paper. Particularly, I suggest
to refer to the book “Aperiodic Order” by Baake in Grimm for general context and
to the “Tilings Encyclopedia” website for more examples.
Particularly, I suggest to add a new section that introduces general substitution and
sofic tilings and give a short overview of sofic tilings. After that, the current Section 1
may follow with a new title, for example “Tilings with golden right triangles”. That
way the context will be properly introduced and I think that may help the paper
to reach wider audience. Additionally, the new introduction could state the main
results of the paper and refer where the relevant definitions are introduced.}

\paragraph{Tilings and local rules.}
Assume that a finite family of polygons $\{P_1,\dots,P_k\}$,  called \emph{proto-tiles}, is given.
Isometric images of those polygons are called \emph{tiles}.
A \emph{tiling} $T$ is a family of pair-wise non-overlapping tiles, which
means that the interiors of the tiles are disjoint.
\emph{A patch} is a finite tiling.
A patch $P$ is called a  
\emph{fragment} of a tiling $T$, if $P$ is a subset of $T$. 
If the diameter of a patch (the maximal distance between points of its tiles) is at most  
$d$, then we call that patch a \emph{$d$-patch}. 
In a similar way we define {$d$-fragments}.
 
 Local matching rules govern how tiles may be attached to each other in a tiling.
More specifically, a local rule is identified by a positive real $d$ and
by  a set of $d$-patches, whose members are called \emph{illegal patches}.
A tiling \emph{satisfies} the local rule,
if it does not include illegal patches as fragments. 
For instance,  all polygons $P_1,\dots,P_k$ may be unit squares with colored sides, and the local rule may require that 
tiles are attached side-to-side and the colors on the adjacent sides match (the so-called \emph{Wang tiles}). 

\paragraph{Aperiodic tile sets.}
The pair (a set of proto-tiles, a local rule) is called \emph{aperiodic} if 
all tilings of the plane satisfying the local rule are non-periodic and 
such tilings exist.

Aperiodic sets of Wang tiles were used to prove
the undecidability of Berger's \emph{Domino problem}: find an algorithm that given a Wang tile set finds out whether
that set tiles the entire plane~\cite{rob}.

\paragraph{Substitutions.}
The usual scheme to prove non-periodicity is based on the notion of a \emph{substitution}.   
A substitution $\sigma$ is defined by a  similarity ratio $\phi<1$
and a way to cut every polygon $P_i$ 
into a finite number of parts where each part is congruent to some
polygon from the family $\{\phi P_1,\dots,\phi P_k\}$. 
The substitution 
acts on tilings as follows.
Given a tiling, for all $i$,  we cut every tile $P_i$ from the tiling, as  
defined by the substitution. 
We obtain a tiling of the same set by tiles of smaller size.
Then we apply to the resulting tiling some fixed pre-chosen homothety $H$ with the 
coefficient $1/\phi$ to obtain a tiling with initial tiles $P_1,\dots,P_k$. %
That homothety will be called \emph{the reference homothety} in the sequel.
We call the resulting tiling
\emph{the decomposition} of the initial tiling and denote 
it by $\sigma T$.
The inverse operation is called \emph{composition}.
That is, a tiling $T$ is a composition of a tiling $T'$
if $T'=\sigma T$. 

\emph{A supertile} is a tiling, which can be obtained from
an initial tile $P_i$ by applying decomposition several times.
A supertile of the form $\sigma^n P_i$ is called a \emph{supertile of order $n$}.
Thus each supertile of order $n$ consists of several supertiles of order $n-1$.

 Assume that the substitution and the local rule have  the following properties:
 \begin{enumerate}\label{properties}
 \item[P1.] All supertiles satisfy the local rule.
\item[P2.] Every tiling $T$ of the plane satisfying the local rule 
has a unique composition satisfying the local rule (
``the unique composition property'').  This tiling is then denoted by  $\sigma^{-1}T$.
\end{enumerate}
In this case it is not hard to show that 
all tilings of the plane satisfying the local rule are non-periodic and
that such tilings exist.  This can be shown as follows.

\emph{Existence.} 
%
By P1 each supertile satisfies local rule.
Obviously the linear size of a supertile $\sigma^n P_i$
is $(1/\phi)^n$ times larger than that of $P_i$.
Hence there are tiling of arbitrary large parts of
the plane satisfying local rule. By compactness arguments this implies
that there are such tilings of
the entire plane.

\emph{Non-periodicity.}
Assume that a tiling  $T$
satisfying local rule has a non-zero period $\mathbf a$, that is, 
$T+\mathbf a=T$.
Then the vector $\phi\mathbf a$ is the period of $\sigma^{-1}T$. Indeed,
let $H$ denote the reference homotethy. 
The  decomposition of the tiling 
$\sigma^{-1}T +\phi\mathbf a$ is equal to the decomposition
of $\sigma^{-1}T$ shifted by the vector $H \phi\mathbf a=\mathbf a$, that is, to  $T+\mathbf a$.
By our assumption, we have  $T+\mathbf a=T$.
Thus both $\sigma^{-1}T +\phi\mathbf a$ and $\sigma^{-1}T$
are compositions of $T$ and they both satisfy local rule. By P2 
we then have $\sigma^{-1}T +\phi\mathbf a= \sigma^{-1}T$.
Repeating the argument, we can conclude that 
the vector   $\phi^2\mathbf a$ is a period of the tiling
$\sigma^{-2}T $. 
In this way we can construct a tiling
whose period is much smaller than
the linear sizes of tiles,
which is impossible.

This scheme was used to prove aperiodicity of many tile sets. Perhaps,
the most famous example is Penrose--Robinson P4 tilings, 
where the set of proto-tiles consists 
of two isosceles triangles (see \cite{penrose,GS}). 
Other famous examples are Ammann tilings (two L-shaped hexagonal tiles) and 
Ammann--Beenker tilings (a rhombus and a square). 
For the definition of these tilings and for more examples we refer two the textbooks \cite{BG,GS}
and to the Tilings Encyclopedia~\cite{te}.

A similar approach was used to show non-periodicity of the famous Robinson tilings~\cite{rob} with Wang tiles.
Robinson's construction does not fit exactly the described framework, as in that construction supertiles of order $n$ are 
built from 4 supertiles of order $n-1$ and several proto-tiles. However,
for the version of Robinson tilings from the paper~\cite{dls},
the proof of non-periodicity follows exactly the above pattern.
In the tiling of~\cite{dls}, there are $2^{14}$ proto-tiles, which are unit squares, and every tile 
is cut in four smaller squares. 

\paragraph{Substitution tilings.} 
A tiling $T$ is called  a \emph{substitution tiling\footnote{We use here the terminology of~\cite{G}.
Another name for substitution tilings, \emph{self-affine tilings},
was used in~\cite{solomyak}.} associated with substitution $\sigma$}, if 
for each finite subset  $P\subset T$ there is a 
supertile  including $P$. The property P1 implies that 
every substitution tiling satisfies local rule.
In some cases the reverse is also true. We will call this property 
P3:
\begin{itemize}
\item[P3.] Every tiling of the plane  satisfying the local rule is a substitution tiling associated with the substitution $\sigma$.
\end{itemize}
For instance, it happens that the family of Penrose--Robinson tilings coincides with 
the family of substitution tilings associated with the respective substitution.  
The same happens for Ammann A2 tilings, see~\cite{dsv}.

\paragraph{Decorations and sofic tilings.}
Assume now that we are given only a substitution $\sigma$  acting on a set $\{ P_1,\dots, P_k\}$
of polygons and no local rule. Then 
it is natural to ask whether there is a local rule such that the properties P1 and P2 
hold, or a local rule defining the family $L$ of substitution tilings associated with $\sigma$. 
In many cases there is no such local rule. In such cases 
we would like to find a decoration of the  family $\{ P_1,\dots, P_k\}$
and 
a local rule 
for the decorated family of polygons.  
This means the following:
\begin{itemize}
\item each proto-tile is replaced  by a finite number of proto-tiles of the same shape (we think that
they have different colors); 
\item a  local  rule is defined for decorated tiles, let  $\tilde L$ denote the 
family of all tilings of the plane satisfying that local rule; 
\item  the family $L$ coincides with the family of tilings obtained
from tilings  $\tilde T\in\tilde L$ by removing colors. 
\end{itemize}
If such a decoration and local rule exist, then we say that the initial family $L$ is \emph{sofic}.

To prove that the given decoration and local rule 
for decorated tiles satisfy the last item, one usually defines 
a substitution $\tilde \sigma$ for decorated proto-tiles with the following properties:  (1)
each decorated proto-tile is cut exactly in the same way as prescribed by the initial 
substitution $\sigma$
(see an example in Fig.~\ref{shen7}) 
\begin{figure}[t]
\begin{center}
\includegraphics[scale=1]{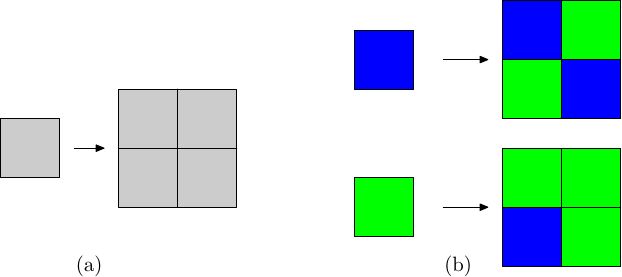}
        \end{center}
        \caption{A substitution (a) and its decoration (b).}\label{shen7}
\end{figure}
and  (2) the properties P1 and P3 hold for $\tilde \sigma$  and the local rule.
To show that it works, assume that there are a decorated substitution $\tilde \sigma$ and a local
rule for decorated tilings with properties P1 and P3. 

In one direction this is trivial: assume that $T$ is obtained from a tiling $\tilde T\in \tilde L$ by removing colors and
assume that $P$ is a finite subset of $T$. Then $P$ is obtained from  a fragment 
$\tilde P\subset \tilde T$ by removing colors.  By P3 the fragment $\tilde P$ occurs in a colored supertile $\tilde\sigma^n\tilde P_i$. Thus
$P$ is included in the supertile  $\sigma^n P_i$.

In the reverse direction: assume that  $T\in L$.
Then every finite $P\subset T$ occurs in a supertile $S$, and by property 
P1 it has a \emph{correct} decoration, which means that  the resulting decorated tiling $\tilde P$ is in $\tilde L$. 
Those decorations for different fragments $P$ may be inconsistent. 
Using compactness arguments, we can show that it is possible to choose consistent such decorations.%
\footnote{Here are more details. Consider the tree whose vertices are correct decorations of fragments 
of the form  $\{F_1,F_2,\dots, F_i\}$. Edges connect a decoration of a fragment 
$\{F_1,F_2,\dots, F_i\}$ to a decoration of the
 fragment  $\{F_1,F_2,\dots, F_i, F_{i+1}\}$ whenever the decorations are consistent. This
 tree has arbitrary long branches. Any vertex of the tree has finitely many neighbors.
 By K\"onig lemma~\cite{konig}, the tree has an infinite branch, which provides 
 a correct decoration of the entire tiling.}

Note that to prove soficness, we do not need property P2 for the decorated family $\tilde L$.
However, if we are not interested in proving that the initial family is sofic
and our goal is just to construct an aperiodic tile set,
we can use P2 (for  $\tilde L$) instead of P3. 

To construct a decoration and a local rule with properties 
P1 and P2, we can try to use a general Goodman-Strauss theorem ~\cite{G} that claims that for every ``good'' 
substitution $\sigma$ 
there is a local rule for decorated tiles with properties P1 and P2.\footnote{Goodman-Strauss formulates property P2, as 
``the local rule enforce the hierarchical structure associated with  $\sigma$'', which means 
that every tiling satisfying the local rule can be uniquely  partitioned into supertiles of order $n$ for each $n$.} 
However, the resulting tile sets are generally gigantic and not really explicit. 
Besides, Goodman-Strauss theorem does not achieve the property  P3.
Both minor points of Goodman-Strauss theorem are inherited by its version by Fernique and Ollinger from~\cite{fo}.
In the case of tilings with Wang tiles, Mozes~\cite{M} proved in a seminal paper that the set of tilings generated by rectangular 
substitutions satisfying a particular property is sofic.

\paragraph{This paper.}
In this paper, we consider tilings with right ``golden'' triangles and the substitution introduced by Danzer and van Ophuysen~\cite{do}. 
Danzer and van Ophuysen showed that 
there is no local rule with properties P1 and P2 for that substitution and 
defined a decoration of the substitution and a local rule for decorated tilings with properties P1, P2 and P3.
However, their decoration of the substitution is  not intuitive
and the local rule is complicated. The local rule uses the notion of a crown.
Let $T$ be a tiling and $A$ a vertex of a tile from $T$. 
\emph{The crown centered at vertex $A$ within $T$} is a fragment of $T$
consisting of all tiles from $T$ that include the point $A$ (not necessarily as a vertex).
The local rule of Danzer and van Ophuysen stipulates that every  crown in the tiling occurs in a supertile.
There are 65 such crowns (up to isometry) and the paper does not even provide their list. 

The goal of the present paper is to provide a more intuitive 
decoration of the substitution and a simpler 
local rule for decorated tilings, also having properties P1, P2 and P3.

\rom{Although there exist some general techniques to show that tiling spaces defined by substitutions are sofic, they are not always easy to implement: on the one hand they are based on some non-trivial assumptions about the substitution itself (very roughly: "the image of a tile is big enough to be able to pass the information without any bottleneck"), on the other hand the resulting sets of tiles are generally gigantic and not really explicit. 

the condition in Goodman-Strauss are more than only side-to-side (complicated notion of "good substitution"). Also, you should cite the other construction of Ollinger "Combinatorial substitutions and sofic tilings" and explain why it does not work (or at last why it does not easily work). I think it is because of the shape of triangular substitution: it is hard to make the required information to reach the corners of the triangle (tiles at corners make a sort of bottleneck).}

\section{Tilings with golden right triangles}

In this paper, we consider a specific substitution and the associated family of   substitution tilings with right ``golden'' triangles introduced by Danzer and van Ophuysen~\cite{do} and later in~\cite{ver}.  

\paragraph{Golden right triangles and tilings.}
The altitude of any right triangle  cuts it into two similar triangles.
Those triangles are denoted by $S,L$ in Fig.~\ref{pic2}(a).
\begin{figure}[ht]
\begin{center}
\includegraphics[scale=1]{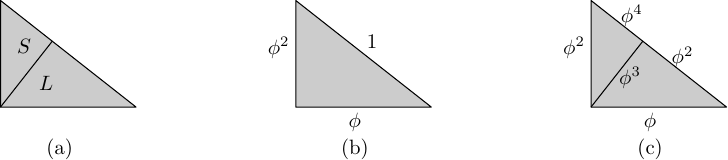}
        \end{center}
        \caption{Golden right triangles.}\label{pic2}
\end{figure}
If the angles of the original right triangle are chosen appropriately,
then the ratio of the size of the initial
triangle to the size of $L$ equals
the ratio of the size of $L$ to the size of $S$.
More specifically, the ratio of the legs
of the initial triangle should be equal to the square root of
the golden ratio $\phi=\sqrt{\frac{\sqrt5-1}2}$.
Such a triangle is shown in Fig.~\ref{pic2}(b). The lengths of the sides
of triangles  $S,L$ are shown in Fig.~\ref{pic2}(c). 
We will call triangles of this shape 
\emph{golden right triangles}.\footnote{The name ``golden'' in a similar context
was used to call isosceles triangles whose all angles are integer multiples of $36^{\circ}$ (Robinson triangles). 
To avoid  confusion, we add the attribute  ``right''.} 

We will use triangles
$L$ and $S$ as proto-tiles. 
More specifically,  isometric images of $L$ are called  \emph{large tiles},
and  isometric images of $S$ are called  \emph{small tiles}.
In Fig.~\ref{pic5}, we can see an example of a tiling. 
\begin{figure}[ht]
\begin{center}
\includegraphics[scale=1]{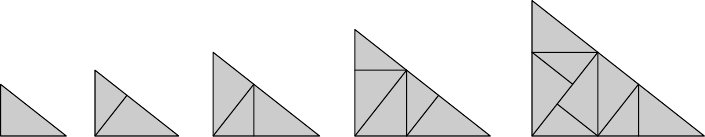}
\end{center}
\caption{A tiling, which is a union of supertiles of orders $0,1,2,3,4$.}\label{pic5}
\end{figure}
We denote by $[T]$ the 
union of all tiles from $T$ and  say that 
$T$ \emph{tiles} $[T]$, or that
$T$ \emph{is a tiling of} $[T]$.

\paragraph{The substitution, decomposition and composition of tilings.}

We consider the following substitution:
\begin{center}
\includegraphics[scale=.7]{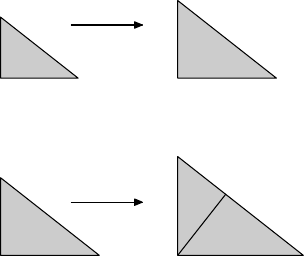}
\end{center}
In the course of decomposition for this substitution, each large tile produces a large and a small tile in
the decomposed tiling and each small tile becomes a large tile.

As usual, we call a tiling $T$ a composition of a tiling $T'$
if $T'$ is the decomposition of $T$. 
There are tilings that have no composition, for instance,
the tiling consisting of a single small tile.
On the other hand, every tiling has at most one  composition (\emph{the unique composition property}). 
As usual, the composition of a tiling $T$ (if exists) is denoted by $\sigma^{-1}T$. 

It may happen that the composition of a tiling again has a composition.
In this case the initial tiling is called \emph{doubly composable}. If
a  tiling can be composed any number of times, we call it \emph{infinitely composable}. 
In terms of \cite G,  infinitely composable tilings are those that have ``hierarchical structure''.
 
\paragraph{Supertiles.}
Recall that \emph{a supertile} is a tiling, which can be obtained from
a small or a large  tile by applying decomposition several times.
Since every large tile is a decomposition of a small tile, 
every supertile can be obtained from a small tile by
applying decomposition some $n$ times.
The number $n-1$ is called then the \emph{order}
of the  supertile. (In particular, the small tile
is a supertile of order $-1$.)
Supertiles of order $i$ are denoted by $S_{i}$. Fig.~\ref{pic5}  shows supertiles of orders $0,1,2,3,4$.

\paragraph{Substitution tilings.}
Recall that a tiling $T$ is called a \emph{substitution tiling}
 if for each finite  $P\subset T$ there is a 
supertile $S$ including $P$.  For instance, all supertiles are substitution  tilings.
There exist substitution tilings of the entire plane. This can be deduced by compactness
arguments from the existence of substitution tilings of arbitrarily large parts of the
plane. However, it is easier to prove this using the following argument.
There are supertiles  of orders 0 and 8, $S_0,S_{8}$, such that
$S_0\subset S_{8}$ and 
$[S_0]$ is included in the interior of $ [S_{8}]$.
Indeed, in Fig.~\ref{fs} we can see a supertile $T$ of order 8.
\begin{figure}[ht]
\begin{center}
\includegraphics[scale=1]{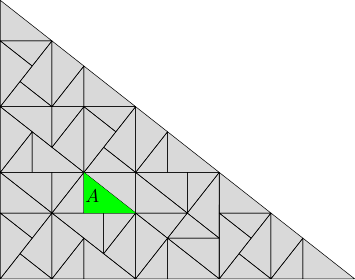}
\end{center}
\caption{The green triangle $A$ is strictly inside a supertile of order 8.}\label{fs}
\end{figure}
The interior of the triangle $[T]$
includes a large tile $A$ (shown in green color).
Applying 8 decompositions to the supertiles
$\{A\}$ and $T$ we get  supertiles $S_{8}=\sigma^8\{A\}$ and $S_{16}=\sigma^8T$,
of orders 8 and 16, respectively. Since $A\in T$, we have 
$S_{8}=\sigma^8\{A\}\subset \sigma^8T=S_{16}$.
In this way we can construct
a tower of supertiles
$$
S_{0}\subset S_8\subset S_{16}\subset S_{24}\subset\dots
$$
where each set $[S_{8n}]$ extends the previous set $[S_{8(n-1)}]$
in all directions.
Therefore the tiling $S_0\cup S_{8}\cup S_{16}\cup \dots$
tiles the entire plane and is a substitution tiling by construction.

It is not hard to see
that every substitution  tiling of the  plane has a composition,
which is again a substitution tiling.
Thus every substitution tiling of the plane  
is infinitely composable. In particular,
every substitution tiling of the plane contains supertiles of all orders.
(To find  a 
supertile of order $n$ in a substitution tiling $T$
of the plane, we can compose it $n$ times and then pick any
large tile in the resulting tiling. The $n$-fold decomposition
of that tile is a supertile of order $n$ and is included in the original
tiling $T$.) As mentioned in the Introduction,
the unique composition property implies
that any infinitely composable (and hence any substitution) tiling
of the plane is non-periodic. 
   
In~\cite{do} and later in~\cite{ver}, it was shown that there is
no local rule such that the family of tilings of the plane
satisfying that local rule coincides with
the family of substitution tilings.
More specifically, it was proved that for any positive
 $d$ there is a periodic 
 (and hence not substitution) tiling  $T_d$ of the  plane, 
 whose all $d$-fragments occur in supertiles. 
For any local rule consisting of $d$-patches,
 either all $d$-patches from $T_d$ are declared legal,
 or  a $d$-patch from $T_d$ is declared illegal.
 In the first case,  the tiling $T_d$ satisfies the local rule.
 In the second case, 
some supertile has an illegal patch, and since all 
 substitution tilings include that supertile, all 
 substitution tilings do not satisfy the local rule.

We will outline a proof of this. We start with the periodic tiling $T$ shown in Fig.~\ref{periodic}.
\begin{figure}[ht]
\begin{center}
\includegraphics[scale=.5]{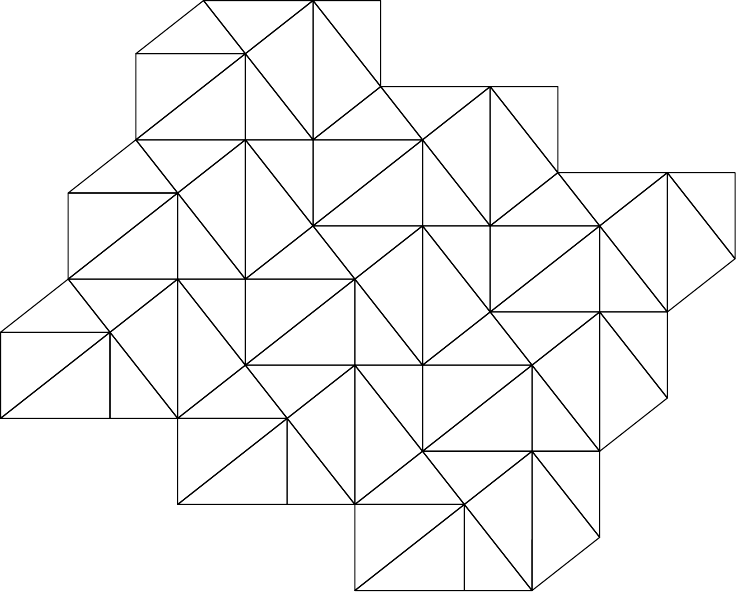}
\end{center}
\caption{Periodic tiling $T$.}\label{periodic}
\end{figure}
It has the following remarkable feature: all its crowns occur in supertiles. In fact, up to an isometry, this tiling
has a single crown,
which occurs in the supertile of order 6 (see Fig.~\ref{fs}).
If $k$ is large enough compared to  $d$, 
then for any $d$-fragment 
 $P$ of the tiling $\sigma^{k}T$ there is a single crown $C$ in $T$ such that $\sigma^k C$ 
includes the entire patch $P$ (this follows from Lemma~\ref{l-bd} below).
As we have seen, the crown $C$ is in the supertile $S_6$ and  hence  $\sigma^k C$ is in $S_{6+k}$.
Thus all $d$-patches in $\sigma^{k}T$ occur in   $S_{6+k}$ and we can let $T_d=\sigma^{k}T$.

\section{Tilings with decorated tiles}

In this section we show that the family of substitution tilings associated with our
substitution is sofic.

\subsection{The local rule of Danzer and van Ophuysen}

We color both tiles in five colors 0, 1, 2, 3, 4.  
The substitution $\tilde\sigma$ acts on decorated tiles as follows:
\begin{center}
\includegraphics[scale=.7]{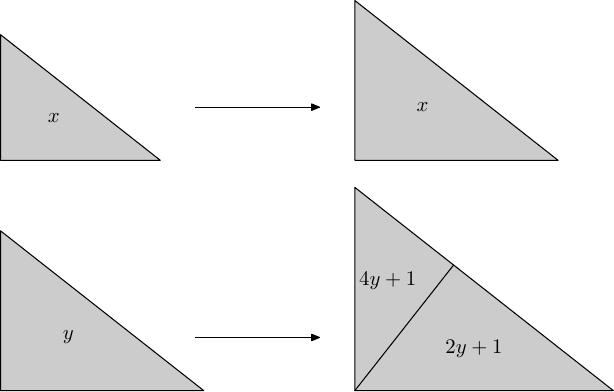}
\end{center}
Addition and multiplication refer to the respective operations modulo 5.

The local rule is based on the notion of a  
\emph{legal crown}. By definition, a crown is legal if it occurs in a supertile.
The local rule stipulates that \emph{all crowns in the tiling must be legal.}
To make this local rule explicit, we need to list all legal crowns.
There are 65 of them (up to an isometry), thus the list is quite long.
However, we can reduce the list using the following observation.

Let us define on tilings the following operation called \emph{shift}.
To shift a tiling by $y=0,1,2,3,4$, we increment the markings of 
all large tiles by $y$ and the markings of 
all small  tiles by $2y$  (modulo 5). It is not hard to see
that the shift of any legal crown is legal. 
Indeed, let $C$ be a crown within a supertile $S_i$ of order $i$, that is obtained from 
a large tile with color $k$. By induction on $i$ it is easy to prove the following:
for each tile $A$ from  $S_i$, its color
is obtained from  $k$ by applying a linear function of the form  
$2^{i+1}x+c_A$ for  small tiles and $2^{i}x+c_A$
for  large tiles. Here $c_A$ denotes a number depending on the location of $A$ within $S_i$.
Thus, if we increase $k$ by $3^{i}y$,
the colors of all large tiles are increased by $y$ and  the colors of all small tiles by $2y$, as $2\cdot 3\equiv1 \pmod 5$.

Let us call two legal crowns \emph{equivalent} if they can be obtained from each other by a shift.
Obviously each equivalence class has 5 legal crowns and hence there are 13 equivalence classes
denoted  $C_1,C_2,C_3,C_4,C_5,C_6,C_7$ and $C'_1,C'_2,C'_3,C'_4,C'_5,C'_6$. 
In Fig.~\ref{dan-stars} we present 
one legal crown from each equivalence class.
\begin{figure}[ht]
\begin{center}
\includegraphics[scale=.6]{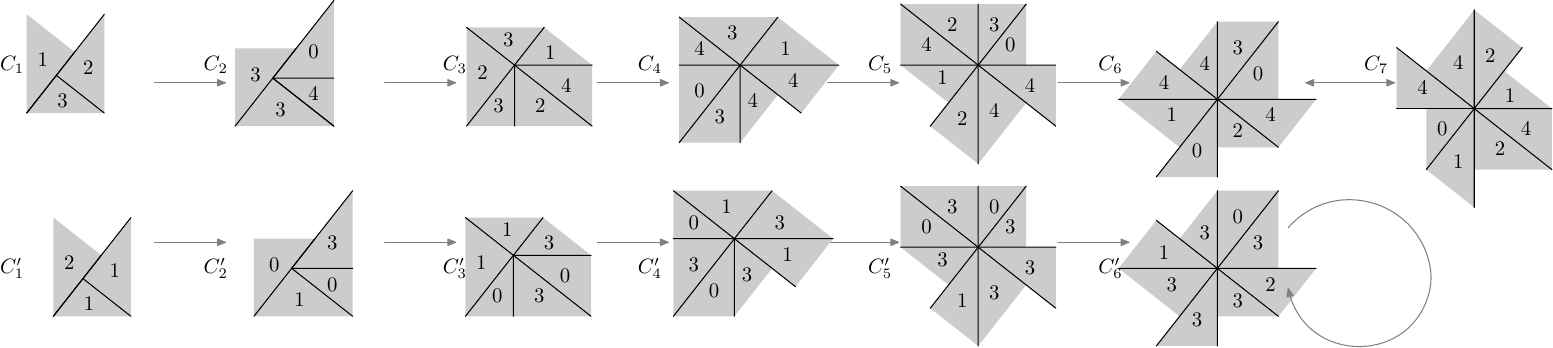}
\end{center}
\caption{Legal crowns for Danzer and van Ophuysen substitution. Every crown
represents 5 crowns obtained by shifts from it. 
Arrows indicate the action of substitution.}\label{dan-stars}
\end{figure}%

This local rule guarantees all the above properties P1, P2 and  P3.

\subsection{Our local rule}
We define first how we color proto-tiles. 
We first choose for every side of the tile its orientation (depicted by an arrow). 
Besides, every side is labeled by an integer number from 
0 to 3. There is the following restriction for those labels:  the 
hypotenuse and the small leg of the large triangle,
and the large leg of the small triangle have even labels, and the remaining sides have odd labels.
Tiles bearing orientation and digital labels on sides are  
called 
\emph{colored tiles}. Each of 2 proto-tiles produces $2^3\cdot 2^3=64$ colored proto-tiles.
Actually, only 22 of these 128 tiles can occur in supertiles and hence we can reduce 
the number of proto-tiles to 22. We will not prove this, since anyway
we obtain more proto-tiles than Danzer and van Ophuysen.

\paragraph{Decomposition and composition of colored tilings.}
The substitution is extended to colored tiles as follows:
\begin{itemize}
\item for small tiles: we increment 
all digital labels by 1 modulo 4 and keep  orientation of all sides
\item for large tiles: we first increment 
all digital labels by 1 modulo 4 keeping  orientation of all sides, and then we 
label the newly appeared altitude by 0 and orient it  
from the foot to the vertex.  The axis of the altitude is divided into two segments, those segments
keep their labels and orientations. 
\end{itemize}
It is not hard to verify that 
the requirement of evenness/oddness of labels is preserved
and thus we obtain again a tiling 
by legally colored tiles.

In Fig.~\ref{pic6} we have shown a large colored tile,  its decomposition, the decomposition of its decomposition
and so on. 
\begin{figure}[t]
\begin{center}
\includegraphics[scale=1]{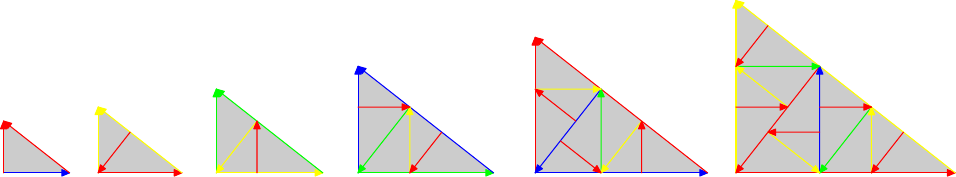}
\end{center}
\caption{Colored  supertiles $S_0,S_1,S_2,S_3,S_4,S_5$.}\label{pic6}
\end{figure}
The digital labels are represented by colors:  red is  0, yellow is  1, green  is 2, blue  is 3,
orientations are shown by arrows.
Long line segments of the same color represent  identically oriented sides with the same 
digital label. This orientation is shown by an arrow at an end of the segment.
The inverse operation is called \emph{the composition of colored tilings}.  

\emph{A colored supertile of order $n$}  is defined as the $n$-fold decomposition of 
a large colored tile.
Colored supertiles of orders 0,1,2,3,4,5
are shown in Fig.~\ref{pic6}. A tiling with decorated tiles is called a \emph{substitution tiling}
if every  its fragment occurs in a (colored) supertile. 

\paragraph{The intuition behind our decoration of supertiles.}
Each supertile $S_i$, $i>0$,  sends a signal whose color equals $(i-1)\bmod 4$ along its altitude from its foot to the top. 
If supertiles $S_{n-1}$ and $S_{n}$ form a supertile $S_{n+1}$,  
then the vertex of the right angle of $[S_{n+1}]$  receives  two signals, $(n-2)\bmod 4$ and $(n-1)\bmod 4$, and in turn sends  the signal  $n\bmod 4$.
The local rule will ensure that all these three signals are ``coherent'', that is, are equal to $(i-2)\bmod 4,(i-1)\bmod 4,i\bmod 4$ for some $i$.
If we were allowed infinitely many colors,
the signal sent by $S_i$ would be just $i-1$. In that case the proof would be much easier.
Each supertile $S_n$ has hierarchical structure, that is, for each $i<n$ it can be partitioned into supertiles $S_i$ and $S_{i-1}$. Hence $S_n$ hosts
many signals. The crucial point is that all those signals are sent along non-overlapping paths. 
We will explain later why the number of colors is 4 (see Remark~\ref{rem4}).

\paragraph{Our local matching rule L.}

To define our local rule, we need a new notion,
similar to that of a crown. We call this  notion  \emph{a star}. 
Let $A$ be a vertex of a tile from a tiling $T$. 
Consider all non-decorated tiles from $T$ that include the point $A$
together with digital marks and orientations
of all the sides that \emph{include the point $A$}.
That information forms \emph{the star within $T$ centered at $A$}.
It is important that we forget orientations and digital labels
of the outer sides of tiles from a star.  
A star may be \emph{incomplete}, which means that no
neighborhood of $A$  is included in the union of tiles from the star. Incomplete stars appear on the borders
of tilings of parts of the plane.
Two examples of stars are shown in Fig.~\ref{pi101}.
\begin{figure}[t]
\begin{center}
\includegraphics[scale=1]{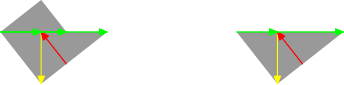}
\end{center}
\caption{A complete star and an incomplete star.  The colors and orientations
of outer sides are not shown, as this information is not included in the star.}\label{pi101}
\end{figure}

\begin{definition}
A complete star is called \emph{legal} if it is one of the stars shown in Fig.~\ref {pic31}. 
The black  line segment on that figure is called
\emph{the axis of a legal star} and may have any orientation and any color. 
All 
digital labels and orientations of all sides lying on the axis must coincide. 
An example of a legal  star is shown in Fig.~\ref{pi101} on the left.
\end{definition}
\begin{figure}[t]
\begin{center}
\includegraphics[scale=1]{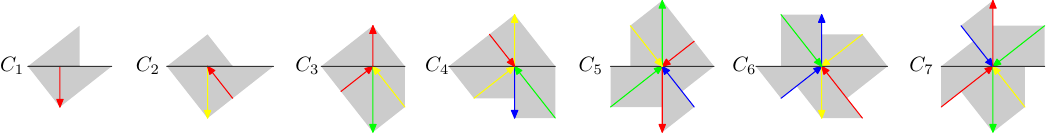}
\end{center}
\caption{The list of legal  stars.
 } \label{pic31}
\end{figure}

 \begin{definition}
A tiling of the  plane  with decorated tiles satisfies our local rule L
if (1) any two sides 
that share a common interval have matching  orientations and digital labels
 and (2) every its star is legal. For a tiling of a part of the  plane  the second item reads: 
every its complete star is legal. 
Tilings  satisfying the local rule L are called \emph{L-tilings}. 
A coloring of tiles in a  tiling with non-decorated  tiles is called \emph{correct}
if the resulting tiling is an  L-tiling.
\end{definition}

\paragraph{Several remarks on legal stars.}
\begin{remark}
We will prove that a star is legal if and only if  it is a complete star within a  supertile.
\end{remark}
\begin{remark}
Observing the tiles in the legal stars we can easily conclude that 
the parity of the digital label of the axis of a legal star must be equal to that of the 
index  of the star. Hence every star from Fig.~\ref {pic31} represents 4 legal stars: there are two ways
to choose  orientation of the axis and two ways to label it.  
Thus there are $7\cdot2\cdot2=28$ different legal stars, up to an isometry.
\end{remark}
\begin{remark}
There are one or two outgoing arrows from the center of any legal star and 
those arrows are orthogonal to the axis of the star. All the remaining arrows are directed towards the center of the star and form with the axis the acute angles $\arcsin\left((\sqrt5-1)/2\right)$ and $\arccos\left((\sqrt5-1)/2\right)$,
called the \emph{smaller} and the \emph{larger} ones, respectively.
Let $n$
denote the index of a legal star. Then the digital labels of the arrows that go into or out of
the center of a legal star are the following. On one side of the star 
the arrow that goes into the center of the star and forms with the axis the smaller acute angle  (if any) is labeled by $n+1$,  the arrow that goes into the center of
the star and forms with the axis the larger acute angle 
  (if any) is labeled by  $n+2$, 
and the outgoing arrow  (if any) is labeled by  $n+3$  (addition modulo 4).
On the other side of the axis the digital labels are   $n-1$,    $n$ and    $n+1$, respectively. 
%
\end{remark}

\section{Results}

The following three theorems claim that our decoration and local rule have the properties P1, P2 and P3. 

\begin{theorem}\label{thm0}
(1) Decomposition of any  L-tiling of the plane is again an L-tiling.
(2) Every supertile is an L-tiling.  (3) Conversely, all legal stars occur in supertiles.
\end{theorem}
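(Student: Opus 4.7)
My plan is to deduce (2) from (1) by induction on order, handle (3) by direct inspection of small supertiles, and concentrate the real work on (1).

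For (2), the base case is a single colored tile (large or small), for which both L-conditions are vacuous since there are no interior vertices and no pairs of sides sharing an interval. The inductive step $S_{n+1}=\sigma S_n$ is precisely an instance of (1). For (3), I inspect the small-order supertiles shown in Fig.~\ref{pic6}, in which each of the seven shape-classes of legal stars from Fig.~\ref{pic31} can be located; iterating the decomposition cycles the axis label through all four residues mod~$4$ (by the rule), and the two possible orientations of the axis come from the two chiralities of the initial tile. Together these yield all 28 distinct legal stars inside supertiles, so by (2) they occur in L-tilings, as required.

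The substance is (1). Fix an L-tiling $T$ and set $T'=\sigma T$. For the matching condition on $T'$: if two sides of $T$ share an interval with matching labels and orientations, their decomposed scaled copies still share matching sub-intervals (labels incremented by $1$ mod~$4$, orientations preserved); each newly introduced altitude edge lies in the interior of an old tile and is split by its foot into two collinear segments matching tautologically; and where an altitude of one decomposed tile coincides with the altitude of a neighboring decomposed tile, both receive the fresh label~$0$ with orientation from foot to vertex, so they agree. For the star condition, let $v$ be a vertex of $T'$ and split into two cases. In Case~1, $v$ is already a vertex of $T$, and its star in $T$ is one of the 28 legal stars; I run through the seven shape-classes and, applying the decomposition rule to each incident tile, verify that the star of $v$ in $T'$ together with any new altitude edges emanating from $v$ is again one of the legal types. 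In Case~2, $v$ is a new vertex, namely the foot of the altitude of some large tile $L_0\in T$, so $v$ lies in the interior of the hypotenuse of $L_0$; the star of $v$ in $T'$ is assembled from the two pieces of $L_0$ (meeting at $v$ along the new altitude) together with whichever pieces of the tile across the hypotenuse contain $v$ after decomposition. I constrain the latter by inspecting the two legal stars of $T$ at the endpoints of the hypotenuse of $L_0$; together with the matching condition along the hypotenuse, these reduce the adjacent configuration to a short finite list on which the resulting star of $v$ is checked legal.

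The main obstacle is Case~2: the newly born altitude in $L_0$ (labeled~$0$) must be coherent with whatever happens on the other side of the hypotenuse. If the opposite tile is a large one sharing the full hypotenuse, its altitude meets the hypotenuse at $v$ as well, and both fresh label-$0$ arrows align by symmetry. If the opposite tile is a small tile, or is a large tile glued along a different side, the geometry forces a specific local configuration whose resulting star must be compared against Fig.~\ref{pic31}. Remark~3, which enumerates the labels of the incoming and outgoing arrows of any legal star in terms of its index, is the key tool: it lets the label structure along the shared hypotenuse in $T$ determine the label structure around $v$ in $T'$, so the verification reduces to a finite mechanical check that each possibility sits in the list of Fig.~\ref{pic31}.
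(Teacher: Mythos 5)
Your treatment of part (1) follows essentially the same route as the paper: closure of the list of legal stars under decomposition for old vertices, and, for the new vertex at the foot of an altitude, an inspection of the legal stars at the endpoints of the hypotenuse forcing the centrally symmetric partner triangle $\tilde F$ so that $F\cup\tilde F$ is a rectangle whose decomposition yields two stars $C_1$. Part (3) is also handled in the paper's spirit (reduce to $C_1$ and locate all four variants in explicit small supertiles), though your claim that the two axis orientations ``come from the two chiralities of the initial tile'' is loose --- the paper simply exhibits both orientations inside a single supertile $S_{10}$.

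The genuine gap is in part (2), in the sentence ``the inductive step $S_{n+1}=\sigma S_n$ is precisely an instance of (1).'' It is not. Statement (1) is about L-tilings of the \emph{entire plane}, and a supertile is a finite tiling. The local rule for a partial tiling only constrains \emph{complete} stars, and the Case~2 argument in (1) needs the star at an endpoint $B$ of the hypotenuse of $F$ to be legal in order to force $\tilde F$ into the tiling; when $B$ lies on the boundary of $[S_n]$ its star is incomplete and the hypothesis gives you nothing, even though the new vertex $A$ is interior. The paper exhibits a concrete counterexample (Fig.~\ref{pic55}): an L-tiling of a part of the plane, vacuously legal because it has no complete stars, whose decomposition contains a complete \emph{illegal} star. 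So the naive induction ``$S_n$ is an L-tiling $\Rightarrow$ $\sigma S_n$ is an L-tiling'' is unsound as stated. The paper repairs this by strengthening the induction hypothesis to the statement (*): in every supertile, \emph{all} stars, including incomplete ones and ignoring labels and orientations, extend to legal stars by adding tiles; this property holds for $S_0$, is preserved by decomposition, and supplies the missing triangle $\tilde F$ in the hard case. Your proposal needs this (or an equivalent strengthening) to make the induction in (2) go through.
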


 

\begin{corollary}\label{c1}
There exists an L-tiling of the plane.
\end{corollary}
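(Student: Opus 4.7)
The plan is to combine the nested tower of supertiles already constructed in Section~2 with part~(2) of Theorem~\ref{thm0}. Recall that that construction produced a chain
$$
S_{0}\subset S_8\subset S_{16}\subset S_{24}\subset\dots
$$
of (uncolored) supertiles in which each $[S_{8n}]$ lies strictly inside $[S_{8(n+1)}]$ and whose union tiles the entire plane. First, fix a single colored large proto-tile and apply the colored substitution $\tilde\sigma$ the appropriate number of times to produce the colored version of each $S_{8n}$; since $\tilde\sigma$ is deterministic and agrees with $\sigma$ on underlying shapes, the decorations of successive elements of the chain are automatically compatible, and their union is a well-defined decorated tiling $T$ of the whole plane whose restriction to each $S_{8n}$ is the colored supertile of order $8n$.

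It then remains to verify that $T$ satisfies the local rule L, that is, both the side-matching condition~(1) and the legality of every complete star~(2). For (1), any pair of adjacent tiles of $T$ lies entirely inside some $S_{8n}$; by Theorem~\ref{thm0}(2) this supertile is an L-tiling, so the matching already holds there and hence in $T$. For (2), given any vertex $A$ of a tile of $T$, choose $n$ large enough that $A$ lies in the interior of $[S_{8n}]$. Every tile of $T$ containing $A$ then belongs to $S_{8n}$, so the complete star of $T$ centered at $A$ coincides with a complete star of $S_{8n}$, which is legal again by Theorem~\ref{thm0}(2).

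Since Theorem~\ref{thm0}(2) does all the substantive work, the corollary reduces to bookkeeping. The main point worth stating explicitly is that completeness of a star involves only a finite neighborhood of its center, so the star is captured inside a single supertile of the tower and no consistency-in-the-limit (or compactness) argument is needed. Accordingly, I do not foresee any genuine obstacle beyond spelling out the two verifications above.
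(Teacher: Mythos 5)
Your overall route is the paper's: build the nested tower $S_0\subset S_8\subset S_{16}\subset\dots$ in colored form and invoke Theorem~\ref{thm0}(2) for each layer. The verification at the end (that any two adjacent tiles, and any complete star, of the union already sit inside a single $S_{8n}$, so no compactness is needed) is correct and is exactly the implicit content of the paper's last sentence. But there is one genuine gap, and it is precisely the point the paper's proof is about: your claim that ``since $\tilde\sigma$ is deterministic and agrees with $\sigma$ on underlying shapes, the decorations of successive elements of the chain are automatically compatible.'' This does not follow. Determinism of $\tilde\sigma$ only guarantees that the colored sub-supertile of $\tilde\sigma^{8(n+1)}\{\tilde P\}$ occupying $[S_{8n}]$ is $\tilde\sigma^{8n}\{\tilde A\}$, where $\tilde A$ is the colored tile of $\tilde\sigma^{8}\{\tilde P\}$ sitting at the distinguished interior position. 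For this to coincide with $\tilde\sigma^{8n}\{\tilde P\}$ (i.e., for the colored chain to actually be increasing), you need $\tilde A$ to carry the \emph{same decoration} as the generating tile $\tilde P$, transported by the nesting isometry. A priori the interior large tiles of a colored $S_8$ could all carry decorations different from $\tilde P$'s, since there are $64$ colored large proto-tiles; nothing about agreeing ``on underlying shapes'' rules this out.

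This is exactly what the paper checks: Fig.~\ref{pic141} exhibits, inside the colored supertile $S_8$ generated from the colored $S_0$ of Fig.~\ref{pic6}, a large tile strictly in the interior whose decoration matches that of $S_0$; only then does decomposing eight more times reproduce a congruent copy of the colored $S_8$ around it, so that the union is a well-defined single decorated tiling. Without this verification you must either (i) point to such a matching interior tile explicitly, possibly in a supertile of higher order than $8$, or (ii) fall back on a compactness/K\"onig argument over the colorings of the uncolored tower --- which your last paragraph explicitly disclaims. So the ``bookkeeping'' hides the one substantive check the corollary requires.
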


\begin{theorem}\label{l3}
 Any  $L$-tiling of the plane has a composition, which is again 
an L-tiling.
\end{theorem}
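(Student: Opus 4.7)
The plan is to construct the composition $T' = \sigma^{-1}T$ explicitly from the L-tiling $T$ and then verify that $T'$ is itself an L-tiling and that $\sigma T' = T$. The key is to identify which shared edges of $T$ should be erased to obtain $T'$: namely those edges that are altitudes of the large tiles of $T'$ and so, upon decomposition, became internal altitudes in $T$.

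\emph{Step 1 (pairing).} For each small tile $s \in T$ I would declare its partner $\ell(s)$ to be the tile lying on the opposite side of the long leg of $s$. By a case analysis over the legal stars at the two endpoints of this long leg (Fig.~\ref{pic31}), I would show that $\ell(s)$ is always a large tile whose short leg coincides with the long leg of $s$, and that the shared edge carries exactly the orientation and digital label that the substitution rule prescribes for a ``just-created altitude'' (up to the cyclic increments by $1$ modulo $4$ that occur in subsequent decompositions). I would also show that each large tile of $T$ is the partner of at most one small tile; the unpaired large tiles are exactly those that will be rescaled into small tiles of $T'$.

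\emph{Step 2 (building and decorating $T'$).} Erase the shared edge of each pair $(s, \ell(s))$ in $T$. The resulting regions are right triangles similar to $L$ (from merged pairs) or $S$ (from unpaired large tiles), each scaled by $1/\psi$; applying the inverse of the reference homothety $H$ produces a tiling $T'$ by proto-tiles of the correct sizes. Decorate $T'$ by giving each of its tiles the orientations of its outer edges in $T$, with every digital label decremented by $1$ modulo $4$. The parity constraint on proto-tiles is preserved by construction, so the decoration is legal, and $\sigma T' = T$ holds tautologically.

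\emph{Step 3 (L-property of $T'$).} Edge-wise matching of orientations and labels in $T'$ follows directly from the corresponding matching in $T$ together with the decoration rule. For the star condition, each vertex $v'$ of $T'$ either corresponds to a vertex of $T$ at which no erased edge terminated, or to the ``vertex end'' of an erased altitude (the ``foot end'' disappears when two collinear edges of $T$ merge into a single edge of $T'$). In each case the local configuration in $T$, a legal star by hypothesis, should translate under the inverse substitution rule into a legal star of $T'$, which I would verify by checking the seven star types of Fig.~\ref{pic31} one by one.

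The main obstacle is Step 1 together with the star verification in Step 3: one must show that every small tile of $T$ has a well-defined and unique large partner, that no conflicts arise between neighbouring pairings, and that the catalog of legal stars is closed under the ``collapse'' operation sending a star of $T$ to the corresponding star of $T'$. This is exactly where the careful design of the decoration --- the use of axis orientations and colours as coherent ``signals'' propagating along altitudes, as described in the intuition paragraph --- pays off; I expect to use Theorem~\ref{thm0}(3) to reduce the verification to a finite inspection of the legal star catalog, tracing what each legal star of $T$ becomes after erasing the appropriate edges and decrementing labels.
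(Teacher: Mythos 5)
Your Steps 1 and 2 are essentially the paper's own construction: the paper pairs each small tile $S$ with the large tile on the other side of its long leg by inspecting the legal star at the right-angle vertex of $S$ (only $C_1$ and $C_3$ can occur there, and both contain the required large partner), then erases the shared legs, decrements all labels by $1$, and rescales. So the existence of the composition goes through as you describe, modulo the point (which both you and the paper treat lightly) that the erased edges must carry exactly the label and orientation that the decomposition rule would recreate, so that $\sigma T'=T$ really holds for the decorated tilings.

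The gap is in Step 3. You propose to verify legality of the composed stars by ``a finite inspection of the legal star catalog, tracing what each legal star of $T$ becomes after erasing the appropriate edges.'' But the star of $T$ at a vertex $A$ does not determine which edges get erased: whether a large tile $F$ of that star is merged with a small partner depends on whether some small tile of $T$ lies across the short leg of $F$, and that small tile generally does not contain $A$, hence lies outside the star. So the composed star at $A$ depends on tiles about which the legal-star hypothesis at $A$ says nothing directly. Theorem~\ref{thm0}(3) does not close this hole: knowing that a star occurs in a supertile tells you how a supertile continues it, not how an arbitrary L-tiling does. This is precisely why the paper needs Lemma~\ref{l1}: every star of type $C_2$--$C_7$ in an L-tiling is \emph{forced} to contain certain tiles outside itself (green on Fig.~\ref{star1}) and \emph{forbidden} to contain others (red), and only with that extra information does each $C_i$ compose to $C_{i-1}$ (with the branching of $C_6$ into $C_5$ or $C_7$). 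Establishing Lemma~\ref{l1} is the bulk of the paper's technical work: one first derives forced neighborhoods of all seven stars, and then, for the hard cases $C_3$, $C_4$, $C_7$, rules out the red tiles by multi-step chains of deductions ending in contradictions (Claim~\ref{lmain} and the Appendix). Your outline correctly names the obstacle, but the proposed resolution --- a one-vertex inspection of the star catalog --- would not get through it.
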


\begin{theorem}\label{thm1} 
A tiling of the plane with colored tiles is a 
substitution tiling if  and only if it is an L-tiling.
 \end{theorem}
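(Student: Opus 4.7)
The direction ``substitution tiling $\Rightarrow$ L-tiling'' is essentially immediate from Theorem~\ref{thm0}(2). If $T$ is a substitution tiling, any complete star of $T$ at a vertex $v$ is a finite fragment and therefore lies inside some supertile $S$. Since $S$ is an L-tiling by Theorem~\ref{thm0}(2) and since a star already covers a full neighborhood of its centre (so no extra tile of $S$ can be attached at the image of $v$), the star of $T$ at $v$ coincides, under the isometry identifying the fragment with its copy in $S$, with a complete star of $S$, which is legal. The side matching clause of L is verified analogously by looking at any two tiles of $T$ sharing a common interval.

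For the converse, let $T$ be an L-tiling and $P\subset T$ a finite fragment. Iterating Theorem~\ref{l3} yields L-tilings $T_n=\sigma^{-n}T$ for every $n\ge 0$ with $T=\sigma^n T_n$. For each tile $t\in T$ define its unique $n$-th ancestor $p^n(t)\in T_n$ to be the tile whose $n$-fold decomposition contains $t$; then the supertile $\sigma^n\{p^n(t)\}\subset T$, of diameter $(1/\psi)^n d_0$ (with $d_0$ the proto-tile diameter), contains $t$. The whole argument reduces to the key claim that for $n$ large enough all tiles of $P$ share one common ancestor $A\in T_n$, because then $P\subset\sigma^n\{A\}$, a supertile.

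To set up the geometry, place the centre of the reference homothety $H$ at the origin and write $c(t)$ for the centroid of $t$. The definition of $\sigma$ (cut, then scale by $1/\psi$) gives the recursion $c(p(t))=\psi\,c(t)+\vec\delta$ with $|\vec\delta|\le d_0$; iterating, $|c(p^n(t))-\psi^n c(t)|\le d_0/(1-\psi)$, so the ancestors of any fixed tile remain in a bounded region of $T_n$, independently of $n$. Consequently the supertile regions $R^{(n)}(t):=H^n([p^n(t)])$ form a nested, increasing sequence of scaled right triangles of diameters $(1/\psi)^n d_0\to\infty$, each containing $t$.

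The main obstacle is to show that $\bigcup_{n\ge 0}R^{(n)}(t)$ is the whole plane, so that any other tile $t'\in T$ eventually lies in some $R^{(n)}(t)$ and thus satisfies $p^n(t')=p^n(t)$. The danger is that nested right triangles can a priori be confined to a sector or half-plane, which would correspond to a ``fault'' persisting under every level of composition and separating $t$ from $t'$ forever. I would rule this out by contradiction, exploiting the decoration: along such a fault the stars sitting on it would have to be legal (that is, belong to the short list of Fig.~\ref{pic31}) not only in $T$ but in every $T_n$, while the altitude label of any given legal star shifts by $1\bmod 4$ under one step of composition, and a case analysis across the finite list of legal stars and the four residues shows that this configuration cannot persist at every level. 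Granted no-fault, given $P$ fix any $t^*\in P$ and pick $n$ so large that $R^{(n)}(t^*)\supset[P]$; then $p^n(t)=p^n(t^*)$ for every $t\in P$, so $P$ is contained in the supertile $\sigma^n\{p^n(t^*)\}$, which finishes the proof.
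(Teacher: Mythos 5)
Your forward direction is fine and matches the paper. The converse, however, rests on a key claim that is false in general: that for $n$ large enough all tiles of a finite fragment $P$ share a common ancestor in $\sigma^{-n}T$, equivalently that $\bigcup_n R^{(n)}(t)$ is the whole plane. The ``fault'' scenario you try to rule out genuinely occurs: one can build L-tilings of the plane as limits of nested configurations anchored at a fixed legal star (rather than at an interior tile, as in the proof of Corollary~\ref{c1}), and such a tiling is a union of several infinite-order supertiles meeting along rays through the star's centre. Tiles on opposite sides of such a ray never acquire a common ancestor, yet the tiling is still a substitution tiling in the sense of the definition --- a patch straddling the ray merely has to \emph{occur in} some supertile, not to sit inside a supertile of $T$'s own hierarchy. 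Your proposed contradiction (``the configuration cannot persist at every level'') cannot work, because by Theorem~\ref{l3} the stars along the fault remain legal in every $\sigma^{-n}T$; nothing breaks.

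The paper's proof sidesteps exactly this issue, and the tell-tale sign in your write-up is that you never use Theorem~\ref{thm0}(3). Instead of a common ancestor \emph{tile}, one only needs the ancestors of all tiles of $P$ (suitably fattened to a patch $Q$ with $P$ interior) to lie in a common \emph{star} $C$ of $\sigma^{-k}T$ --- a vertex configuration that may span two or three level-$k$ supertiles of $T$'s hierarchy. Lemma~\ref{l-bd} supplies this by an elementary geometric covering argument (any disc of diameter small compared to the tiles is covered by a single vertex star). Since $\sigma^{-k}T$ is an L-tiling, $C$ is legal, and Theorem~\ref{thm0}(3) then says $C$ occurs in some supertile $S_n$; hence $\sigma^k C\supset Q$ occurs in $S_{n+k}$, which is what ``substitution tiling'' requires. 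To repair your proof you would need to replace the common-ancestor claim by this star-based covering step and invoke Theorem~\ref{thm0}(3); as written, the argument does not go through.
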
 
 
It follows from  Theorem~\ref{l3}  that all L-tilings of the plane are non-periodic. 
Indeed, they are  infinitely composable, and hence non-periodic, as explained above.
We first prove Theorem~\ref{thm0} and Corollary~\ref{c1}, then we derive Theorem~\ref{thm1} from  Theorem~\ref{l3}
and then we prove the latter.

\begin{proof}[Proof of Theorem~\ref{thm0}]
Let us extend decomposition to stars: to decompose 
a star, we decompose the respective tiling and then delete all the resulting tiles that do not include the center of the star.
It is not hard to verify that the family of legal  stars
is closed under decomposition: see Fig.~\ref{pic3} where
the action of decomposition is shown by grey arrows. 
\begin{figure}[t]
\begin{center}
\includegraphics[scale=1]{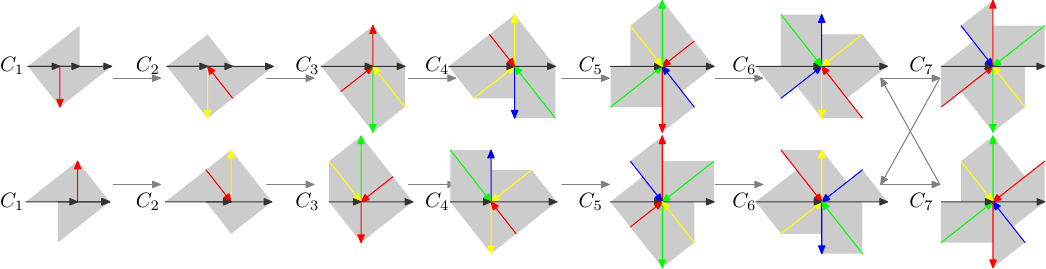}
\end{center}
\caption{The action of decomposition on legal  stars is shown by grey arrows. 
The stars  $C_6,C_7$ in the course of 4 decompositions are mapped to themselves, since the decomposition
works as rotation by the right angle (ignoring colors) on the stars $C_6,C_7$, and 
we increment digital labels modulo 4.
Note that the decomposition maps both stars $C_{5}$ and $C_7$
to $C_6$. This is not surprising, as 
the stars $C_{5}$ and $C_7$ differ only in one tile and
after decomposition  this difference disappears.}\label{pic3}
\end{figure} 

(1) 
Let $T$ be an L-tiling of the plane and $A$ a vertex of a tile from $\sigma T$.
We have to show that the star centered at $A$ in  $\sigma T$ is legal. We consider two cases.

\emph{Case 1:}  $A$ is also a vertex of a tile from $T$. Then the star centered at $A$ in  $T$ is legal,  as $T$ is an L-tiling. 
The star centered at $A$ in  $\sigma T$ is obtained by decomposition from that star and hence is legal as well.
 
\emph{Case 2: } $A$ is not a vertex of a tile from $T$. 
Then $A$ is a foot of the altitude of a large tile $F$ from $T$ and hence lies on the hypotenuse of $F$.
 Let $B,C$ denote endpoints of that  hypotenuse  (see Fig.~\ref{pic54}). Consider the star within $T$ centered at $B$. 
 Observing Fig.~\ref{pic31}, we can see that such situation (the center of the star is an endpoint of the hypotenuse of  
 a large tile and the foot of the altitude of that tile is not a vertex) occurs only in
 stars $C_3$--$C_7$ and in all cases the star includes the triangle $\tilde F$ obtained by the central symmetry 
 \begin{figure}[t]
\begin{center}
\includegraphics[scale=1]{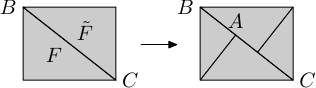}
\end{center}\caption{$A$ is not a vertex of a tile from $T$}\label{pic54}
\end{figure} 
 centered at the middle point of the hypotenuse: $F$ together with $\tilde F$ from a rectangle.  
Decomposition of that rectangle produces two stars $C_1$.

(2) We use induction on the order of the supertile. Base of induction is trivial, since supertiles of order less than 4
have no complete stars. To make the induction step,
we would like to extend item (1) to tilings of parts of the plane. 
However this cannot be done, as there is a tiling of a part of the plane with no complete 
stars such that its decomposition has a complete illegal star (see Fig.~\ref{pic55}).
\begin{figure}[t]
\begin{center}
\includegraphics[scale=1]{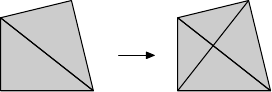}
\end{center}
\caption{An L-tiling of a part of the plane whose decomposition is not an L-tiling}\label{pic55}
\end{figure} 

Why cannot we repeat the above arguments to show that
decomposition of any L-tiling is an L-tiling? 
The only reason why the above arguments fail, is that in Case 2 we 
cannot claim that  the star within $T$ centered at $B$ is legal and hence the tiling $T$ includes 
the large tile $\tilde F$.  
Indeed, it might happen, that although $A$ is an inner 
point of $[T]$, neither  $B$, nor $C$ are  inner points of  $[T]$.
In that case we have no information about the stars  of $T$ centered at $B$ and $C$.

To handle this case, we will show that if $T$ is a supertile, $T=S_i$, then 
\begin{itemize}
\item[(*)]
ignoring orientations and digital marks,
\emph{all} stars in $T=S_i$, including incomplete ones, can be extended to  legal stars by adding some tiles
that do not overlap tiles in $T$.
\end{itemize} 
This statement can be proved by induction on $i$. Base of induction:  the tile $S_0$ has three stars
and they all can be extended to legal stars, provided we ignore digital marks and orientation.\footnote{This ignoring
is important, as there are colored large tiles which possess stars that 
cannot be extended to legal stars.} 
 
 The induction step follows from the fact that the family of tilings $T$
 satisfying (*) is closed under decomposition. Indeed,  assume that a tiling $T$ satisfies (*).
 If $A$ is a vertex in $T$, then the star centered at $A$ in $\sigma T$ can be extended to decomposition of the completion of
the star of $A$ in $T$. Otherwise 
$A$ is a foot of the altitude of a large tile $F$ from $T$ and hence lies on the hypotenuse of $F$. 
Again we 
consider $B,C$, the  endpoints of that  hypotenuse,  see Fig.~\ref{pic54}.
By our assumption, the star centered at $B$ in $T$ can be completed to a legal star (ignoring digital marks and orientation)
by adding tiles that do not overlap $T$.
 As verified above, that star contains the triangle $\tilde F$, as shown in Fig.~\ref{pic54}.
 Hence the star centered at $A$ in $\sigma T$ can be completed to the star $C_1$
 by adding the large tile obtained by decomposition of $\tilde F$ (obviously, that tile does not overlap $\sigma T$).
 
 Now we can handle the hard case. Assume that 
$A$ is a vertex in $S_{i+1}$ but  not a vertex in $S_i$, and $A$ is an inner point of $[S_{i+1}]=[S_{i}]$. 
We have to show that $S_i$ includes the the triangle $\tilde F$ (Fig.~\ref{pic54}).
By (*) the star within $S_i$ centered at $B$ can be extended to a complete legal star $C$ 
by adding some tiles not overlapping $S_i$. As we have seen, $C$ includes the triangle $\tilde F$.
If $\tilde F$ did not belong to $S_i$, then $A$ would lie on the border of $[S_i]$, as $\tilde F$ does not overlap $S_i$.

(3) 
 It suffices to prove the statement for stars $C_1$ only. Indeed, 
for every $i>1$ the star $C_i$ is obtained from $C_1$ by $i-1$ decompositions.
Thus, if $C_1$ occurs in $S_n$, then $C_i$ occurs in  $S_{n+i-1}$.
There are four stars of the type $C_1$: we have two ways to label the axis (yellow or  blue)
and two ways to choose its orientations. The stars $C_1$ with yellow axis
of both orientations appear on the altitude 
of the supertile $S_{10}$ (Fig.~\ref{pic1}).
\begin{figure}[t]
\begin{center}
\includegraphics[scale=.7]{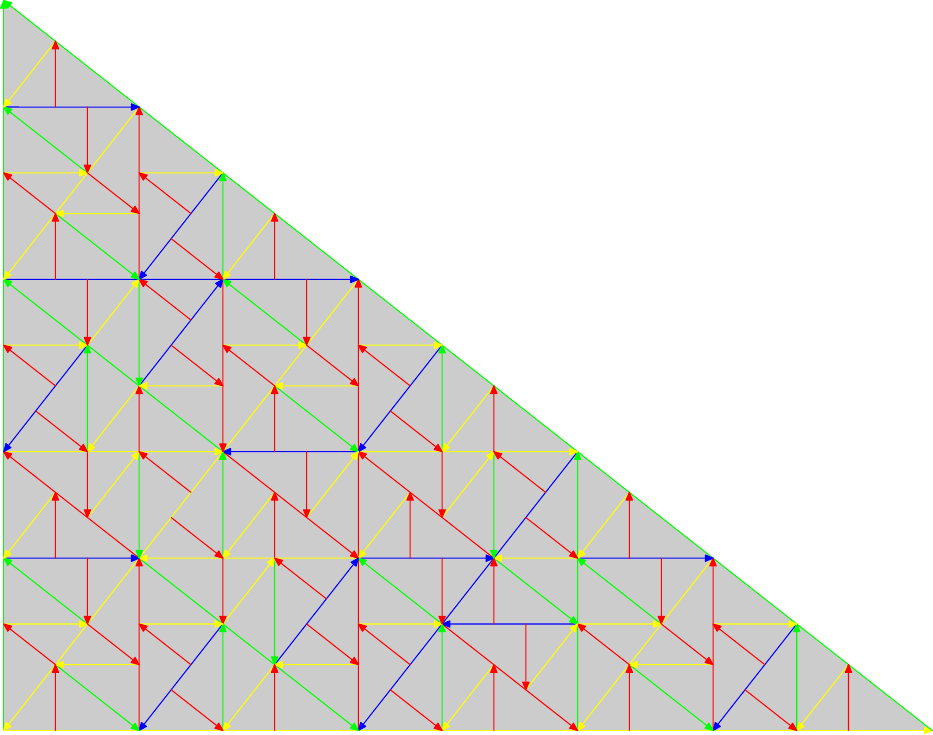}
\end{center}
\caption{A colored supertile $S_{10}$ has all four 
starts of the type $C_1$.}\label{pic1}
\end{figure}
The stars $C_1$ with blue axis appear on the altitude of the supertile $S_{4}$ on 
Fig.~\ref{pic6} (they appear also in Fig.~\ref{pic1}, inside the supertile $S_{10}$).
\end{proof}

\begin{remark}\label{rem4}
We can explain now why we use 4 digital marks. Assume that digital marks belong to  $\Z_k=\{0,1,\dots, k-1\}$,
the substitution $\tilde\sigma$ works as before and the local rule $L_k$ stipulates 
that all stars occur in supertiles. Then we would have lcm$(4,k)$
legal stars of the shape $C_6,C_7$. Indeed, geometrically, substitution acts as $90^\circ$
rotation on stars of this shape (see Fig.~\ref{pic3}). And on digital marks it works as 
adding 1. Thus we have two independent cycles 
of lengths 4 and $k$, whose superposition is a cycle of length     lcm$(4,k)$.

One can show that the choice $k=1,2$ does not work, as for $k=1,2$ there is a periodic tiling 
satisfying the local rule $L_k$.  
The choice $k=3$ might work. However, there are   lcm$(4,3)=12$
legal stars of  the shape $C_6,C_7$ for that $k$, thus the local rule becomes too complicated any way.
So the choice $k=4$ seems to be optimal.
\end{remark}

\begin{proof}[Proof of Corollary~\ref{c1}]
Consider the supertile of order 0 shown in Fig.~\ref{pic6}. 
Decomposing that large tile 8 times,
we obtain a supertile of order 8 which is an L-tiling by Theorem~\ref{thm0} (see Fig.~\ref{pic141}). The green tile $A$ 
\begin{figure}[t]
\begin{center}
\includegraphics[scale=.5]{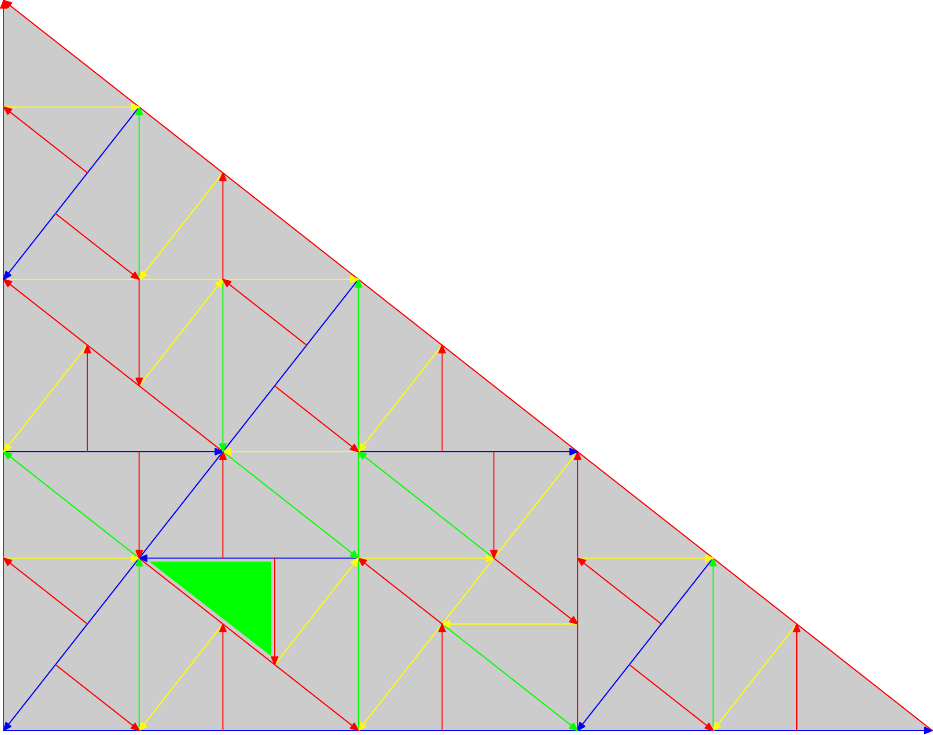}
\end{center}
\caption{A colored supertile $S_{8}$ contains a large tile of the same color as the initial tile $S_{0}$ which it was obtained from.}\label{pic141}
\end{figure}
has the same color as the initial large tile. Thus, if we decompose this supertile $S_8$ eight times, then the green tile
will produce another  supertile $S_8$. In this way we can construct
a tower of supertiles $S_0\subset S_8\subset S_{16}\subset \dots$ whose union is an L-tiling of the entire plane.  
\end{proof}

\begin{remark}
One can show that there are only 7 different small tiles
and 15 different large tiles (up to isometry) that occur in supertiles as inner tiles (a tile is called \emph{an inner 
tile of a supertile}, 
if no its side lies on the border of the supertile). Thus we can reduce the total number of proto-tiles to 22. Indeed, 
to prove  Corollary~\ref{c1}, we do not need remaining tiles.
\end{remark}

\begin{proof}[A derivation of Theorem~\ref{thm1}  from  Theorem~\ref{l3}]
 By definition,
every fragment of every  substitution tiling of the plane occurs  in a colored 
supertile, hence is legal by Theorem~\ref{thm0}(2).  Therefore every  substitution tiling  is an  L-tiling. 
 
To prove the converse, consider any fragment  $P$
of an $L$-tiling $T$ of the  plane. We have to show that $P$ occurs in a (colored) supertile. To this end add in $P$ a finite number of tiles 
from  $T$ so that $P$ becomes an inner part of the resulting fragment $Q$.
By Theorem~\ref{l3} we can compose $T$ any number of times
and the resulting tiling is an  L-tiling.  
Consider the sets of the form $\sigma^k C$ where $C$ is a star within the tiling
$\sigma^{-k}T$. 
As $k$ increases, these sets increase as well. If $k$ is large enough, then the set $Q$ 
is covered by a  single such set, say by $\sigma^k C$,
that is,  $Q\subset \sigma^k C$ 
(Lemma~\ref{l-bd} below). As $\sigma^{-k}T$  
is an L-tiling, all its stars are legal.  In particular, the star $C$
is legal. By Theorem~\ref{thm0}(3), the star $C$ appears in a supertile, say in $S_n$. Therefore
the tiling $\sigma^{k}C$ appears in the supertile $S_{n+k}$. Hence the patch $Q$  
appears in that supertile provided we ignore labels and orientations of its
outer sides. Since no side of the  patch $P$ is an outer side of $Q$,
we are done. It remains to prove Lemma~\ref{l-bd}. 
\end{proof}

\begin{lemma}\label{l-bd} 
If $k$ is large enough compared to  $d$, 
then for any substitution tiling $T$ of the plane 
for any its $d$-fragment 
 $P$ there is a star $C$ in the  tiling $\sigma^{-k}T$ such that $\sigma^k C$ 
includes the entire patch $P$.
\end{lemma}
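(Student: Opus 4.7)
The plan is to exploit the inflation property of $\sigma$: composing $T$ many times makes each vertex star of $\sigma^{-k}T$ correspond via $\sigma^k$ to a large region of $T$, so for $k$ large any $d$-fragment $P$ fits inside such a region. Concretely, each step of $\sigma$ dilates regions by the reference homothety $H$ (coefficient $1/\psi>1$), so $[\sigma^k C]=H^k[C]$ for any fragment $C$ of $\sigma^{-k}T$, and the desired inclusion $P\subset\sigma^k C$ is equivalent to $H^{-k}[P]\subset[C]$.

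The crux is a uniform reach statement: there is a constant $r_0>0$ such that for every substitution tiling $T'$ of the plane and every point $x\in\R^2$, some vertex $A$ of $T'$ satisfies $B(x,r_0)\subset[\mathrm{star}_{T'}(A)]$, the closed union of tiles of $T'$ containing $A$. To prove this I would use finite local complexity of substitution tilings: since every finite fragment lies in a supertile and supertiles have only finitely many shapes, only finitely many local configurations (up to isometry) occur around any point in any substitution tiling. Define $\rho(x)=\sup\{r:B(x,r)\subset[\mathrm{star}_{T'}(A)]\text{ for some vertex }A\}$; then $\rho(x)$ depends only on the local configuration around $x$, takes finitely many values, each of them positive (any tile containing $x$ has a vertex whose star contains an open neighborhood of $x$), so $\inf_x\rho(x)\ge r_0>0$ uniformly.

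Given this, the lemma follows easily. Pick any $x_0\in[P]$, so $[P]\subset B(x_0,d)$ and hence $H^{-k}[P]\subset B(H^{-k}x_0,\psi^k d)$. Since $\sigma^{-k}T$ is itself a substitution tiling, the uniform reach statement yields a vertex $A$ of $\sigma^{-k}T$ with $B(H^{-k}x_0,r_0)\subset[\mathrm{star}_{\sigma^{-k}T}(A)]$. Once $k$ is chosen large enough that $\psi^k d\le r_0$, taking $C$ to be the star of $A$ in $\sigma^{-k}T$ gives $H^{-k}[P]\subset[C]$, equivalently $P\subset\sigma^k C$.

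The main obstacle is the uniform reach statement. Although geometrically natural, it has to apply at arbitrary points $x$ -- inside a tile, on an edge, at a vertex, or at the foot of an altitude -- and the resulting $r_0$ must be genuinely uniform over all substitution tilings. Once this Lebesgue-number-type bound is in hand, the rest is just routine bookkeeping of how $\sigma$ rescales regions via $H$.
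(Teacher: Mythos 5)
Your overall reduction is exactly the paper's: pass to $\sigma^{-k}T$, observe that $P\subset\sigma^kC$ is equivalent to $H^{-k}[P]\subset[C]$, enclose $[P]$ in a disc of radius $d$, and note that the rescaled disc has radius $\psi^k d\to 0$, so everything hinges on a uniform ``Lebesgue number'' $r_0>0$ such that every disc of radius $r_0$ is contained in some vertex star. Where you diverge is in how that uniform bound is obtained. The paper proves it by a short explicit geometric computation valid for \emph{every} tiling of the plane by these two triangles (not just substitution tilings): if at most two tiles meet the disc, an endpoint of their shared segment works; if three or more do, some tile meets the disc on two different sides, the common vertex $E$ of those sides is within $\ddd/\sin\alpha$ of the disc, and the star at $E$ covers the disc of radius $h$ around $E$ -- giving the concrete sufficient condition $h\ge \ddd/\sin\alpha+\ddd$. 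You instead invoke finite local complexity and a compactness-style argument that $\inf_x\rho(x)>0$. That route can be made to work for substitution tilings, but it buys less (non-constructive constant, and FLC is itself not established anywhere in the paper and needs an argument for this non-edge-to-edge substitution), whereas the paper's observation is elementary, explicit, and applies to arbitrary tilings -- which matters because in the application to Theorem~\ref{thm1} the tiling is an L-tiling not yet known to be a substitution tiling.

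Two soft spots in your version as written. First, the positivity justification ``any tile containing $x$ has a vertex whose star contains an open neighborhood of $x$'' is false for boundary points: if $x$ lies in the interior of a long side of a tile $F$ that abuts several smaller tiles, the stars of the vertices of $F$ need not cover a neighborhood of $x$; you must take a vertex lying on the shared sub-segment near $x$ (this is precisely the paper's two-tile case). Second, ``$\rho(x)$ depends only on the local configuration around $x$'' needs a fixed radius within which the configuration determines a positive lower bound for $\rho$, before FLC can be applied to conclude finitely many values. Both are repairable, but the paper's direct geometric observation sidesteps them entirely.
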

%
\begin{proof} 
Consider supertiles of the form $\sigma^k \{A\}$ for $A\in\sigma^{-k}T$, call them \emph{$k$-supertiles}.
These supertiles partition $T$ and hence $P$ is covered by a finite 
number of $k$-supertiles. 
More specifically, $P$ is covered by those $k$-supertiles $\sigma^k \{A\}$ which
intersect 
$[P]$. For small $k$, for instance for $k=0$,
the respective tiles $A$ might not belong to a single star within 
the tiling $\sigma^{-k}T$. However, the sizes of $k$-supertiles increase as $k$ increases, 
and for a large enough $k$ 
 the respective tiles $A$ belong to a single star within 
the tiling $\sigma^{-k}T$.
Indeed, cover the set $[P]$ 
by a disc $S$ of radius $d$ (centered at any point from  $[P]$).
It suffices to show that if $k$ is large enough, then there is a star
$C$ in $\sigma^{-k}T$ such that $[\sigma^k C]$ covers disc $S$.
In other words, $[C]$ covers $H^{-k}S$, the inverse image of 
$S$ under the $k$th power of the reference homothety $H$.
The radius of $H^{-k}S$ equals $\phi^kd$,
therefore the claim follows from the following
\begin{quote} 
Geometrical observation:
\emph{
 Let $ \alpha$ denote the minimal angle of the right golden triangle
and $h$ the length of the altitude of the small right golden triangle.
Let $S$ be a disc of diameter $\ddd$.
If $h\ge \ddd/\sin\alpha +\ddd$, then every tiling of the plane has  
a star $C$ such that $S\subset [C]$.}
\end{quote}
\begin{proof}[Proof of the observation]
We have to show that tiles intersecting the disc $S$ belong to a single star.
If there is a single such tile, then this is obvious.
If there are exactly  two such tiles, $A$ and $B$, then they 
must share a part of a side and at least one end of these two sides
belongs to both tiles. Then for the star $C$ centered at that end
we have $[C]\supset A\cup B\supset S$. 
Finally, if there are  three or more such tiles, 
then at least one of those tiles, call it $F$, has common points with $S$ lying on two different
sides of the tile $F$.  Let $E$ denote the common point of those sides 
and let $A,B$ denote the points from $S$ that belong to different sides of $F$.
The angle $\angle AEB$ is one of the angles of the right golden triangle
and the length of $AB$ is at most    $\ddd$. Hence $\ddd\ge |AB|\ge |AE| \sin \alpha$.
Therefore  $|AE|$ is at most $\ddd/\sin\alpha$.
All the points from $S$ are at distance at most $\ddd$ from $A$ and hence at distance at most 
 $\ddd/\sin\alpha+\ddd$ from $E$.  That is, $S$ is covered by 
 the disc with center $E$ and radius $\ddd/\sin\alpha+\ddd$. That disc is covered by the star centered at $E$, 
provided the length of the altitude $h$ of small tiles is at least  its radius $\ddd/\sin\alpha+\ddd$. 
\end{proof}

This observation provides the relation between $k$ and $d$ we need. Assume that $h\ge 2d\phi^k(1/\sin\alpha +1)$. Then 
any $d$-fragment $P$ of the initial substitution tiling $T$ is covered 
by a disc of diameter $2d$ and is included in $\sigma^k C$ for some star $C$ from the tiling $\sigma^{-k}T$.
\end{proof}

\begin{proof}[Proof of Theorem~\ref{l3}]  
  Let $T$ be an  L-tiling of the  plane. We have to show that it has a composition
  and that its composition is again an L-tiling.

\emph{Why $T$ has a composition?} 
Let  $S$ be any small tile from $T$. Consider the star within $T$ centered at the 
vertex of the right angle of $S$. We know that that star is legal.
There are only two stars in the list of legal stars, whose center is a vertex of the right angle of a small tile, the stars $C_{1}$ and $C_{3}$.
\begin{center}
\includegraphics[scale=1]{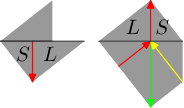}
        \end{center}
In both these stars the small tile
$S$ has the complement labeled $L$ on the picture.
Therefore the      tiling $T$ has a composition which is obtained 
by removing the common legs of all complementary pairs of
tiles $S,L$ and by decrementing all labels by  1 (and then applying $H^{-1}$, where $H$ is the reference homothety).
Hypotenuses of large tiles are built of two legs of different 
tiles from $T$. Those legs have the same labels, since they belong to axes of stars $C_{1}$ and $C_{3}$.
Hence hypotenuses of all large tiles obtain unique digital labels.  
Finally, new labeling of sides 
is 
legal, that is, the hypotenuses and small legs of large tiles and
large  legs of small tiles have even colors and all other sides have odd colors.

\emph{Why the composition of $T$ is an L-tiling?}
We have to show now that the resulting colored tiling is an  $L$-tiling.
Since $T$ satisfies item (1) of the local rule, so does  $\sigma^{-1}T$. Let us verify item (2) of the local rule.

Let $A$ be a vertex of a 
tile from $\sigma^{-1}T$.
We have to prove that the colored star centered at $A$ in  $\sigma^{-1}T$ is legal.
First note that the star centered at the same vertex $A$ in the initial tiling $T$ is different from  
$C_1$, as the centers of stars 
$C_1$ become inner points of sides  in  $\sigma^{-1}T$. Thus that star is one of the stars 
$C_2$--$C_7$. 
We claim that the composition transforms 
these stars by the inverse arrows from Fig.~\ref{pic3}. To prove this, we need the following  
\begin{lemma}\label{l1}
If  $T$ is an  L-tiling of the plane, then every its 
star of type $C_2$--$C_7$, depending on its index,  includes all tiles marked green in Fig.~\ref{star1}
and does not include tiles marked red (the star itself is marked grey).
\end{lemma}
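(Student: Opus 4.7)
My plan is to do a case analysis over the six star types $C_2,\dots,C_7$. Fix such a star $C$ centered at $A$ inside the L-tiling $T$, and let $B_1,B_2,\dots$ denote the outer vertices of $C$ (the vertices of tiles from $C$ other than $A$ that lie on the boundary of $[C]$). Since $T$ is an L-tiling of the whole plane, every $B_j$ is the centre of a \emph{complete} star in $T$, and by the definition of $L$ this star must be one of the 28 legal stars listed in Fig.~\ref{pic31} (up to isometry, orientation and axis label).

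For each boundary vertex $B_j$, read off the following data from $C$ itself: which tiles of $C$ touch $B_j$, and the orientations and digital labels of the edges of those tiles that are incident to $B_j$. Item~(1) of $L$ forces the star of $T$ at $B_j$ to be consistent with this data on every side it shares with $C$. The structural description of legal stars in Remark~3 (one or two outgoing arrows orthogonal to the axis, labels $n{+}1,n{+}2,n{+}3$ and $n{-}1,n,n{+}1$ on the two sides) together with the parity rule of Remark~2 (parity of the axis label equals the parity of the index) usually pins down the star at $B_j$ uniquely: the axis must go through the edge of $C$ with the correct parity, its orientation is fixed by the matching condition, and the legal-star indices at $B_j$ that are compatible with the already known labels at $B_j$ form a small list. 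Whenever a single legal star is forced at $B_j$, the tiles of that star that are not already in $C$ must appear in $T$: these are the green tiles on Fig.~\ref{star1}. Conversely, any tile that does not belong to any of the legal stars compatible with $B_j$ is forbidden: these are the red tiles on Fig.~\ref{star1}.

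For some boundary vertices several legal stars may remain compatible; in that case I would examine the \emph{opposite} boundary vertex of the same external side, which shares the label and orientation of that side with $B_j$ and subjects the two candidate stars to a second consistency check. A typical example is the large tile $F$ appearing in Case~2 of the proof of Theorem~\ref{thm0}: the star centred at an endpoint of the hypotenuse of $F$ is narrowed down to $C_3$–$C_7$ and, together with the matching on the other endpoint, always forces the mirror tile $\tilde F$ to lie in $T$; the same style of argument produces the green tiles around every outer side of $C_2,\dots,C_7$.

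The main obstacle is purely the book-keeping: six star types, each with four or five boundary vertices, each requiring a short enumeration of candidate legal stars. Nothing beyond Remarks~2 and 3 and item~(1) of $L$ is needed, but the argument is most efficiently presented by annotating Fig.~\ref{star1} directly rather than by verbally enumerating every case. I would therefore write the proof as one model case worked in detail (say $C_3$, since it is the simplest nontrivial one) and then assert that the remaining five cases $C_2,C_4,C_5,C_6,C_7$ follow by the same argument, with the green/red tiles read off from Fig.~\ref{star1}.
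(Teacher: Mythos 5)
Your basic mechanism --- propagating constraints by asking which of the $28$ legal stars can sit at each vertex adjacent to the given star --- is exactly the engine the paper uses (its ``neighborhoods of legal stars'' are precisely the tiles forced by such one-step deductions, and they do account for most of the green tiles). But there is a genuine gap: you assume that each green tile is forced, and each red tile excluded, by a single enumeration at the nearest boundary vertex $B_j$. That fails for the hard cases. Already for $C_5$ the red small tile is perfectly compatible with a legal star ($C_3$) at the adjacent vertex; the contradiction appears only one step further, after adding that star's forced neighborhood and checking a second vertex. For $C_3$, $C_4$ and $C_7$ the situation is much worse: the paper needs a separate statement (Claim~\ref{lmain}), proved in a multi-page appendix, in which one assumes the forbidden patch occurs, runs a chain of five to ten forced-star deductions with genuine case splits (e.g.\ Case~1/Case~2 for $C_4$, each with sub-branches and a central-symmetry argument), and only then derives an orientation or label mismatch. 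Moreover, for $C_7$ the difficulty is not a red tile but a \emph{green} one: the bottommost small tile is not uniquely forced at the relevant vertex --- the three stars $C_2,C_3,C_4$ all fit there --- and ruling out $C_2$ is itself one of the hard appendix cases.

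Consequently your closing plan (work $C_3$ in detail, assert the other five follow ``by the same argument'') cannot yield a proof: the six cases are not instances of one uniform shallow computation but differ qualitatively in depth, and $C_3$ is in fact one of the three cases that each demand their own long contradiction argument. The ``opposite boundary vertex'' refinement you mention points in the right direction, but you would still have to carry out, case by case, the full search tree that the paper relegates to its appendix; Remarks~2 and~3 together with item (1) of the local rule do not by themselves pin down the stars quickly enough to avoid it.
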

\begin{figure}
\begin{center}
\includegraphics[scale=1]{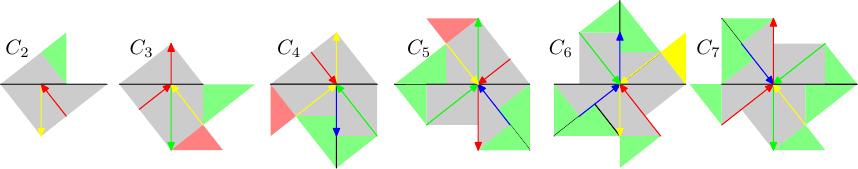}
\end{center}
\caption{Surroundings of legal stars}\label{star1}
\end{figure}
We will first finish the proof of the theorem assuming Lemma~\ref{l1}.
Lemma~\ref{l1} guarantees that via composition tiles from each star  $C_i$ except $C_6$ are transformed 
to tiles from the star $C_{i-1}$. In the course of composition, we decrement the labels and do not change
the orientation of sides. Therefore, digital labels and orientations of all sides 
become, as in the star $C_{i-1}$. Hence for $i>1$, $i\ne 6$, the star 
$C_i$ is transformed to the star $C_{i-1}$. 
For the star $C_6$, one its large tile can be transformed in two ways, depending on 
whether the small yellow  tile is in $T$ or not. If it is, then  tiles from $C_6$ form the star  $C_7$ and otherwise $C_5$.

Hence in the course of composition,  all stars
in the tiling $T$  are transformed by the inverse grey arrows from Fig.~\ref{pic3},
which implies legality of all stars in  $\sigma^{-1}T$.
It remains to prove Lemma~\ref{l1}.
\end{proof}

\section{Proof of Lemma~\ref{l1}}
We first show that in any L-tiling of the plane every star must have some fixed neighborhood,
called \emph{the neighborhood of the star}.  Those neighborhoods include all green small tiles (Fig.~\ref{star1})
except one small tile near $C_7$.
In this analysis, we do not use labels and orientation of sides of tiles.
It is instructive, for reader's convenience, to print out all the legal stars and their neighborhoods (Fig.~\ref{pic32}, \ref{stars-neccessary10} 
and \ref{stars-neccessary1} on pages~\pageref{pic32} and~\pageref{stars-neccessary1})
and then to cut them out of paper. Matching tiles from 
those paper stars with the tiles from the figures below, it is easy to verify all the claims
that  certain stars do not fit in certain places.

\paragraph{The neighborhoods of legal stars.}
The neighborhoods of the stars $C_1,C_{2},C_{3},C_{4},C_{5}$
are shown in Fig.~\ref{stars-neccessary}. They all are centrally symmetric. 
The initial star is colored in grey, the added tiles are colored in light-grey. These neighborhoods are obtained
from each other by decomposition.
One can verify that each of the first five stars indeed must have such neighborhood
as follows.

\emph{The star $C_1$}. Look at the blue vertex inside the grey star $C_1$ 
(Fig.~\ref{stars-neccessary}). That vertex lies on the large  leg of a large tile.
One can easily verify that there is the unique legal star whose center 
lies on the large  leg of a large tile, namely the star 
$C_1$. Hence the star within $T$ centered at the blue vertex is again $C_1$ and we
get the sought neighborhood.
  \begin{figure}[ht]
  \begin{center}
\includegraphics[scale=1]{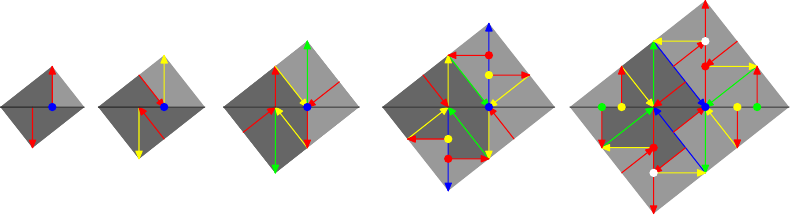}
        \end{center}
        \caption{The neighborhoods of the first five stars.}\label{stars-neccessary}
 \end{figure}     
   
\emph{The star  $C_2$}. 
The argument is similar to the previous one. The star 
$C_2$ has the following feature: it has a vertex  (colored in blue)
that lies on the hypotenuse of its large 
tile. It is easy to verify that there is the unique star
whose center lies on the hypotenuse of its large 
tile, namely the star $C_2$.

\emph{The star $C_3$}. 
The star  $C_3$ is the unique star that has two
small triangles sharing the small leg. Hence the star centered at the  blue vertex is again
$C_3$.

        \emph{The star $C_4$}. 
The star  $C_4$ is the unique star
that has two large triangles sharing the small leg. Hence the star centered at the blue vertex is again
$C_4$. However this star does not complete the neighborhood: 
the stars centered at yellow vertices must be 
$C_1$ and the stars centered at red    vertices again must be $C_1$.

\emph{The star  $C_5$}. 
The star  $C_5$ is the unique star
that has two small  triangles sharing the hypotenuse.
Hence the star centered at the  blue vertex again must be
$C_5$.  The stars centered at yellow and green
vertices must be
$C_1$. Furthermore, the stars centered at red and white vertices must be
$C_2$.

The neighborhoods of the stars
$C_6$, $C_7$ are shown on
Fig.~\ref{stars-neccessary2} (on the right).
 One can verify in the following way that the stars $C_6$, $C_7$ 
 indeed must have such neighborhoods.

\emph{The star $C_6$.} 
The stars centered at yellow and green vertices 
must be  $C_1$ and the stars centered at red and black vertices must
be  $C_2$  (on the left in Fig.~\ref{stars-neccessary2}).
\begin{figure}[ht]
\begin{center}
\includegraphics[scale=1]{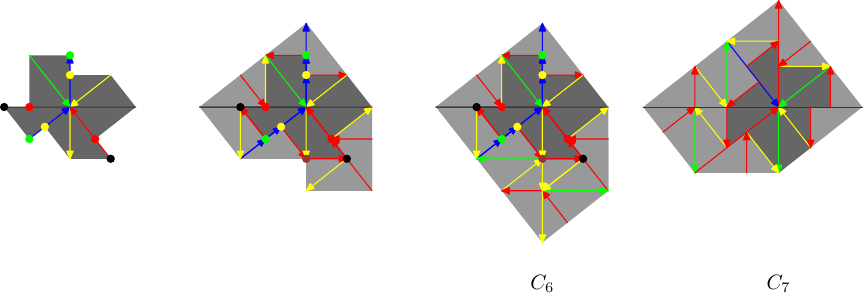}
\end{center}
\caption{The neighborhood of the stars $C_6,C_7$.}\label{stars-neccessary2}
 \end{figure}
Now we see that the star centered at the brown vertex must be  $C_3$,
which is added together with its neighborhood.

\emph{The star  $C_7$}. 
Since the star  $C_7$ can be obtained from $C_6$ by rotation (ignoring labels and orientation),  
the arguments are entirely similar to those for the star $C_6$.


Now we can start the proof of the lemma.
Assume that the star centered at a vertex
$A$ in an L-tilling  $T$ is   $C_i$ where $i>1$.
We have to prove that $T$ includes all tiles marked green on~Fig.~\ref{star11} (= Fig.~\ref{star1})
and does not include tiles marked red (the star itself is marked grey).
\begin{figure}
\begin{center}
\includegraphics[scale=1]{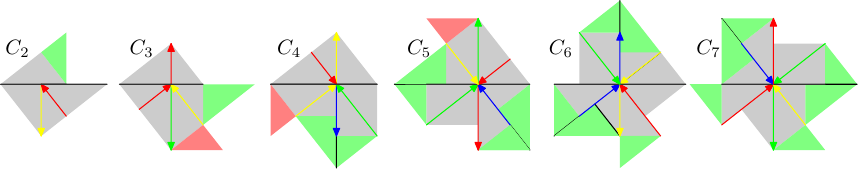}
\end{center}
\caption{Neighborhoods of stars}\label{star11}
\end{figure}
We will treat all $i$'s separately.
We start with simple cases $i=2,5,6$.

\paragraph{The star of
  $A$ in  $T$ is  $C_{2}$ or $C_6$.}   
It is easy to verify that in both cases  the neighborhood of the star
 includes all tiles marked green in Fig.~\ref{star11}, and we are done.

\paragraph{The star of the vertex  $A$ in $T$ is $C_{5}$.}
Fig.~\ref{pi510}(a,b) show the star $C_{5}$
and its neighborhood. We can see that the neighborhood includes all 4 tiles marked green in Fig.~\ref{star11}.
We need to show that the small red triangle is not in $T$. 
For the sake of contradiction assume
that the tiling $T$ includes the patch shown in Fig.~\ref{pi510}(c).
\begin{figure}[ht]
\begin{center}
\includegraphics[scale=1]{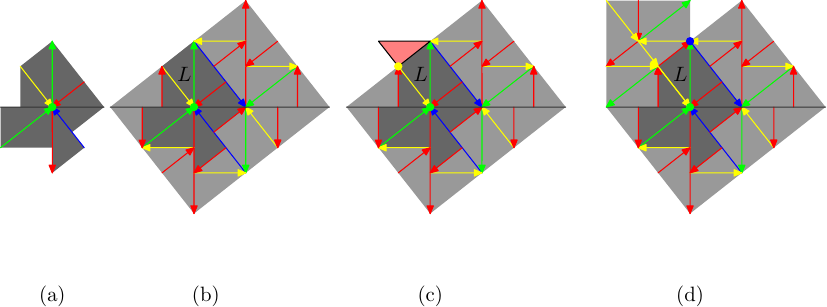}
\end{center}
\caption{Composition of the star $C_5$}\label{pi510}
\end{figure}
We will
say that 
\emph{a star $C_i$ fits for a given patch in a given its vertex}
if one can draw (an isometric copy of) $C_i$  centered at that vertex
so that each of its tiles either does not overlap all the tiles from that tiling, or belongs to that tiling.
It is easy to verify that only the star  $C_3$
fits for this patch in the yellow vertex. Adding to the
patch that star and its neighborhood, we obtain the patch shown in Fig.~\ref{pi510}(d). 
Now, 
we can verify that no legal star
fits for this patch in the blue vertex. Indeed, that star must have  7 triangles, and only
$C_4$ has this property among legal stars. It is easy now to verify that $C_4$ does not fit.

We proceed now to hard cases $C_3,C_5,C_7$. The arguments 
are very similar to those used in the case of $C_5$ but the analysis 
is much more involved. Therefore we moved most of the proof to Appendix, as 
the proof of Claim~\ref{lmain} (page~\pageref{lmain}).

 \paragraph{The star of   $A$ in  $T$ is  $C_{3}$.}
The vertex  $A$ is colored green on  
Fig.~\ref{pi3011}(a) and its star is colored in
dark-grey. In Fig.~\ref{pi3011}(b) we show the 
neighborhood of that star (added tiles are colored in light-grey).
We can see that the neighborhood includes the large tile marked green in Fig.~\ref{star11}.
Now we have to show that the tiling $T$ does not 
the small triangle marked red in Fig.~\ref{pi3011}(c).
This is the statement of Claim~\ref{lmain}(a). 
\begin{figure}[ht]
   \begin{center}
\includegraphics[scale=1]{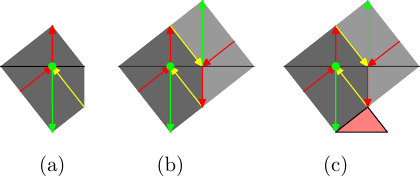}
\end{center}
 \caption{Composition of the star $C_3$.}\label{pi3011}
 \end{figure}

\paragraph{The star of the vertex $A$ is  $C_{4}$.}
The vertex $A$ is shown by the green point in Fig. \ref{pi4011}(a).
\begin{figure}[ht]
\begin{center}
\includegraphics[scale=1]{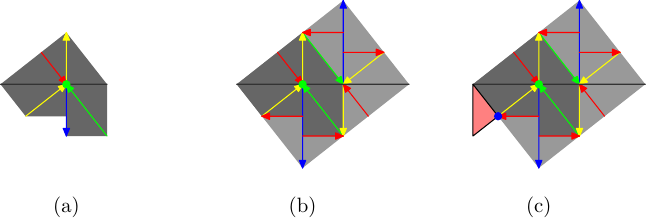}
\end{center}
\caption{Composition of the star $C_4$.}\label{pi4011}
\end{figure}
And 
Fig. \ref{pi411}(b) shows the neighborhood of that star.
We can see that the neighborhood includes both tiles marked green in Fig.~\ref{star11}.
It remains to show that the tiling $T$ does not 
the small triangle marked red (Claim~\ref{lmain}(b)).

\paragraph{The star of $A$ in $T$ is $C_{7}$.}
 Fig.~\ref{pi602}(a,b) show the star $C_{7}$   
and its neighborhood. We can see that the neighborhood includes all 6 tiles marked green in Fig.~\ref{star11},
except for the bottommost small tile.
\begin{figure}[ht]
\begin{center}
\includegraphics[scale=1]{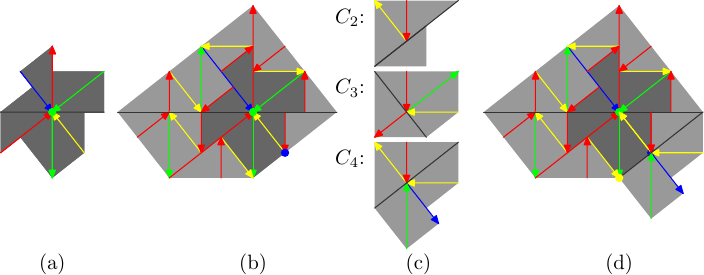}
\end{center}
\caption{Composition of the star $C_7$. 
}\label{pi602}
\end{figure}
To prove that it includes also that tile,
consider the blue vertex.
Only the stars  $C_2,C_3,C_4$
fit for the patch in that vertex, they are  shown on
Fig.~\ref{pi602}(c). If the star of the blue vertex is 
$C_3$, we are done, as that star includes the sought tile.
It remains to show that neither of the stars
$C_2,C_4$ can stand in the  blue vertex.

It is easy to show that 
$C_4$ cannot be there. Indeed, adding
that star to the patch, we obtain the patch shown
in Fig.~\ref{pi602}(d). No legal star fits for that patch
in the yellow vertex. And Claim~\ref{lmain}(c) claims that the star centered at the blue vertex cannot be
$C_2$ either. 
It remains to prove 
\begin{claim}\label{lmain}
The following patches (Fig.~\ref{pi1})
cannot occur in L-tilings of the plane. 
\end{claim}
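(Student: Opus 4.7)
The plan is to attack Claim~\ref{lmain} by the same forced-propagation method that already dispatched the easy case $C_5$ in the main text. In each of the three sub-cases (a), (b), (c), I assume for contradiction that the given patch is a fragment of an L-tiling $T$ of the plane, pick a vertex on the boundary of the patch at which the geometry already constrains the possibilities heavily, enumerate all legal star types $C_1,\dots,C_7$ of Fig.~\ref{pic31} that \emph{fit} at that vertex (in the sense defined in the $C_5$ analysis), and for each fitting star invoke Fig.~\ref{stars-neccessary} / Fig.~\ref{stars-neccessary2} to append its forced neighborhood. This produces a finite case tree of enlarged forced patches; I then move to a new boundary vertex of the enlarged patch and repeat, closing every branch by exhibiting a vertex at which no legal star fits at all.

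Concretely, for part (a) I would start from the patch obtained by attaching the forbidden small triangle to the neighborhood of $C_3$ (Fig.~\ref{pi3011}(b,c)) and test the vertices on the perimeter of that neighborhood; only a handful of stars can sit at each such vertex, and in each branch either the new tiles collide with the already-placed ones or they propagate to another boundary vertex where the above dichotomy recurs. Part (b) is analogous, starting from Fig.~\ref{pi4011} augmented by the red small tile; here both green tiles forced by the $C_4$-neighborhood are already present, so one again fans out at the exposed vertices of the neighborhood and closes each branch. For part (c), the hypothesis places a $C_2$-star at the blue vertex of the patch from Fig.~\ref{pi602}(c); substituting the full neighborhood of $C_2$ from Fig.~\ref{stars-neccessary} creates new boundary vertices whose fitting-star analysis will, after one or two further extensions, reach the same impasse.

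The main obstacle is not conceptual but strictly combinatorial: the case tree branches several times, and one must verify at each node that the enumeration of fitting stars is exhaustive and that none of the newly forced tiles overlaps previously forced ones or violates the side-matching part of the local rule. There is no shortcut here; the verification is mechanical once one has the legal stars and their forced neighborhoods cut out on paper, as the author recommends, and each individual branch terminates after a bounded number of extensions. For this reason the detailed branch-by-branch bookkeeping is best deferred to the appendix, with the main text merely recording how the three sub-cases plug back into the proof of Lemma~\ref{l1}.
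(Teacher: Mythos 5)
Your plan is the paper's own method: assume the patch occurs, enumerate the legal stars that fit at a well-chosen vertex, graft on their forced neighborhoods, and recurse until every branch dies. The starting points you pick for (a), (b), (c) are also the ones the paper uses. So there is no disagreement about the approach.

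The gap is that the proposal stops exactly where the proof begins. For a claim of this kind the finite case analysis \emph{is} the mathematical content, and none of it is carried out: no branch is actually closed, and the branching structure (which in case (b) involves two genuinely different sub-cases, a central-symmetry argument, and a two-stage strategy of first forcing a large symmetric patch and only then deriving the contradiction) is not even sketched. More importantly, your description of how branches terminate --- ``exhibiting a vertex at which no legal star fits at all'' --- is not how most of them terminate in fact. In the actual proof the decisive contradictions come from the \emph{decorations}: a candidate star is geometrically placeable but its arrow orientation or digital label clashes with one already forced (e.g.\ in case (b) the axis of the initial star is forced to point rightwards by the analysis at one end and leftwards at the other; in case (c) a yellow arrow would have to change color to blue across a vertex). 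This is not a cosmetic point: the paper proves earlier that the \emph{undecorated} triangles admit no local rule, so a propagation argument that only tracks shapes and tile positions cannot possibly reach a contradiction for these patches --- some branches would simply extend forever. Any correct execution of your plan must therefore carry the orientations and labels through every forced neighborhood and use them to kill branches, and until that bookkeeping is actually done the claim remains unproved.
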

\begin{figure}[ht]
   \begin{center}
\includegraphics[scale=1]{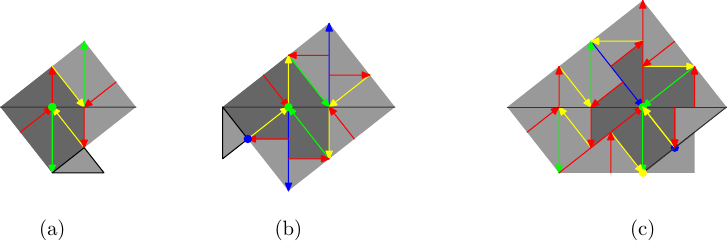}
\end{center}
\caption{The following patches 
do not occur in L-tilings of the plane (Claim~\ref{lmain})}\label{pi1}
\end{figure}
To prove the claim, we  explore a small neighborhood
of the patches in a similar way, as it was done in finding neighborhoods of the stars.
The  proof is deferred to Appendix.

\section{Acknowledgments} 
The author is grateful to Daria Pchelina  and Alexander Kozachinskii for verifying the proofs and reading the preliminary version of the paper,
and to anonymous referees for valuable comments.

\appendix
\section{Proof of Claim~\ref{lmain}}

(a) For the sake of the contradiction assume that the patch
in Fig.~\ref{pi301}(a) occurs in an L-tiling.
\begin{figure}[ht]
   \begin{center}
\includegraphics[scale=1]{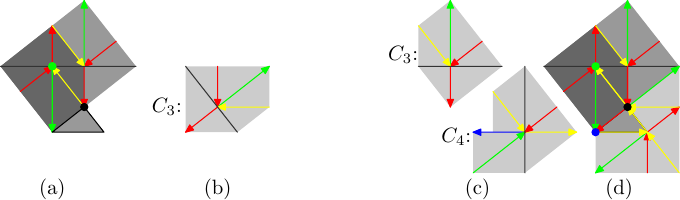}
\end{center}
 \caption{Composition of the star $C_3$ (the beginning).}\label{pi301}
 \end{figure}
Look at the vertex colored
black. 
A quick look at the list of legal stars reveals
that only the star $C_{3}$ (shown in Fig.~\ref{pi301}(b)) 
fits for the patch  in that vertex.  Adding that star and its neighborhood, we obtain the patch shown in Fig.~\ref{pi301}(d),
where the added tiles are colored light-gray.
 
Now look at the blue vertex on the bottom left. 
In that
vertex only the stars $C_3$ and $C_4$ fit.
Those stars are  shown 
in Fig.~\ref{pi301}(c).
The star $C_3$ has non-matching orientation of the green arrow, hence $C_3$ cannot
be the star within $T$ in the blue vertex. Therefore, it is the star
$C_4$. 
Adding the star
$C_4$   and its neighborhood, we obtain the tiling shown on 
 Fig.~\ref{pi302}(a).
\begin{figure}[ht]
 \begin{center}
\includegraphics[scale=1]{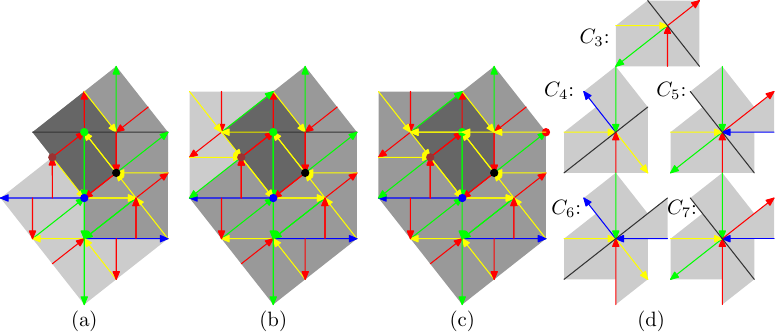}
\end{center}
\caption{Composition of the star $C_3$ (the end).}\label{pi302}
\end{figure}

A small search reveals that only the star
$C_3$ fits for that patch in the brown vertex on the top left.
Fig.~\ref{pi302}(b) shows the tiling that is obtained 
by adding the star $C_3$ and its neighborhood.
Thus we conclude that the axis of the initial star and its extension to the right must be
directed from the right to the left  (yellow arrows in Fig.~\ref{pi302}(c)).
Now look at the beginning of the leftmost
yellow arrow (the red point on the top right in Fig.~\ref{pi302}(c)).
Only the stars shown in Fig.~\ref{pi302}(d) fit for the resulting 
patch in the red vertex. However none of them can be there, since all they have non-matching orientation of 
the horizontal yellow arrow.

(b) 
For the sake of contradiction assume that an L-tiling $T$ includes the patch shown in 
Fig. \ref{pi412}(a).
\begin{figure}[ht]
\begin{center}
\includegraphics[scale=1]{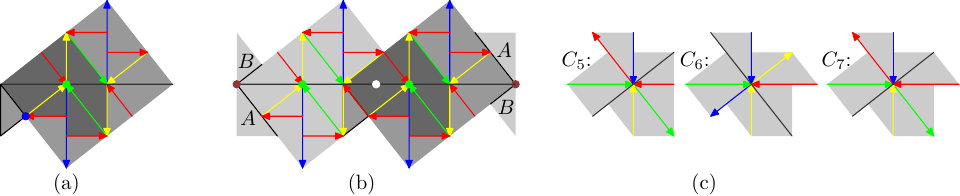}
\end{center}
\caption{Composition of the star $C_4$ (beginning)}\label{pi412}
\end{figure}
Our plan is the following.
We first show that, in addition to all tiles in the patch in
Fig.~\ref{pi412}(a), the tiling $T$ must contain 
all the tiles  shown in Fig.~\ref{pi412}(b).

Assume that this is done.
It is easy to verify that only the stars $C_5,C_6,C_7$ (shown in Fig.~\ref{pi412}(c))
fit for the patch in the rightmost brown
vertex. In all three cases  the axis of the initial star (the horizontal black line)
must be directed rightwards.
However, similar arguments
applied to the leftmost
brown vertex show that
 that axis must be directed leftwards, which is a contradiction.

So we have to show that the tiling $T$ must contain 
all the tiles  shown in Fig.~\ref{pi412}(b). To this end,
let us go back to  Fig. \ref{pi412}(a). We copied that patch in Fig. \ref{pi411}(a).
\begin{figure}[ht]
\begin{center}
\includegraphics[scale=1]{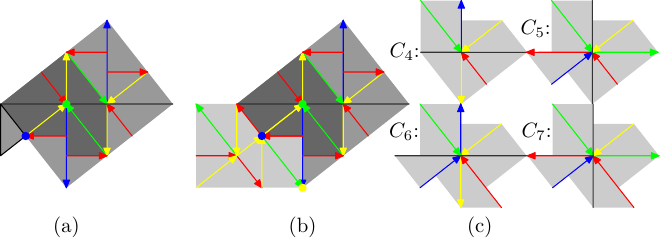}
\end{center}
\caption{Composition of the star $C_4$ (continued)}\label{pi411}
\end{figure}
Only the star  $C_3$ fits for this patch
in the blue vertex. Fig. \ref{pi411}(b)  shows the patch that is obtained by adding the star
$C_3$ and its neighborhood. 
Look now at the yellow vertex on the bottom.
Only the stars $C_4$,  $C_5$,  $C_6$, $C_7$ fit for the patch in that vertex, they
are shown in Fig. \ref{pi411}(c).
Note that the stars  $C_4,C_6$ (from the left column) have
non-matching orientation of the vertical blue arrow, which must direct downwards, hence cannot be there.
We will consider the remaining two cases separately.

 \emph{Case 1:  the star  $C_7$ is in the  yellow vertex in Fig. \ref{pi411}(b).}
 In this case we are able to derive a contradiction
quite easily. Adding the star 
$C_7$ and its  neighborhood we obtain the patch shown in Fig. \ref{pi434}(a).
\begin{figure}[ht]
\begin{center}
\includegraphics[scale=1]{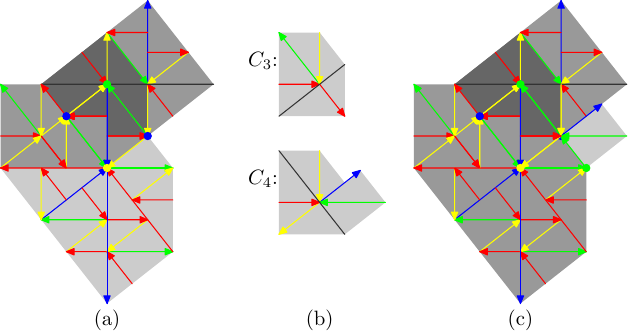}
\end{center}
\caption{Composition of the star $C_4$: case  1 (beginning)}\label{pi434}
\end{figure}
Only the stars $C_3,C_4$ (shown in Fig.~\ref{pi434}(b)) fit 
in the blue vertex.
Adding the star $C_4$, we obtain the patch shown in Fig. \ref{pi434}(c). 
We can see that no legal star fits in the green vertex on the right.

Hence only the star $C_3$ can be in the blue vertex.
Adding that star and its neighborhood,
we obtain the patch shown in Fig. \ref{pi402}(b). 
\begin{figure}[ht]
\begin{center}
\includegraphics[scale=1]{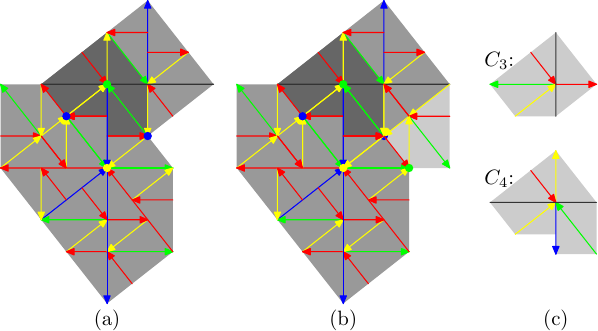}
\end{center}
\caption{Composition of the star $C_4$: case  1 (continued)}\label{pi402}
\end{figure}
Only the stars  $C_3,C_4$ (shown in Fig. \ref{pi402}(c))
fit for the resulting patch in the green vertex on the right. However the star
$C_3$ has non-matching orientation of the green arrow, hence
the star centered at the green vertex is 
$C_4$. Adding it and its neighborhood, we obtain the patch in Fig. \ref{pi403}(a). 
\begin{figure}[ht]
\begin{center}
\includegraphics[scale=1]{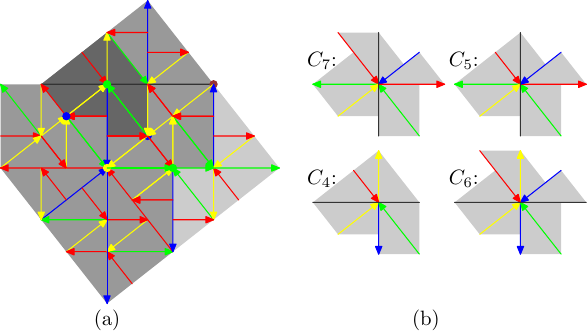}
\end{center}
\caption{Composition of the star $C_4$: case  1 (the end)}\label{pi403}
\end{figure}

Only the stars $C_4,C_5, C_6, C_7$ (shown in Fig. \ref{pi403}(b))
fit for the patch in the brown vertex on the top right.
However all those stars have non-matching orientation of the  yellow
arrow in the lower half of the star.
Thus we have derived a contradiction in the first case.

\emph{Case 2: the star centered at yellow vertex in Fig. \ref{pi411}(b)
  is $C_5$.}
In this case we need a more involved analysis. 
Let us go back to  Fig.~\ref{pi411}(b)
and add the star $C_5$ and its neighborhood
in the 
yellow vertex. We obtain the patch shown on 
Fig.~\ref{pi415}(b).
\begin{figure}[ht]
 \begin{center}
\includegraphics[scale=1]{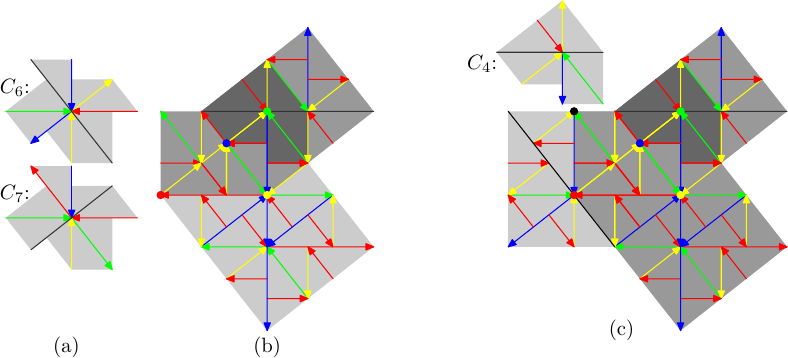}
\end{center}
 \caption{Composition of the star
   $C_4$: case 2 (beginning)}\label{pi415}
\end{figure}
Which stars fit for the patch in the leftmost vertex
(colored in red)? These are the stars 
$C_6,C_7$, which are  shown in Fig.~\ref{pi415}(a). Assume first that it is $C_6$.
Adding the neighborhood of the star $C_6$ centered at
the red point, we get the patch  shown in Fig.~\ref{pi415}(c).
If it is $C_7$, we get a patch that differs from
this one in orientation and labels of some sides.
This difference does not matter and therefore
we will consider only the case of  $C_6$.

Look at the vertex colored black (on the top left).
Only the star $C_4$ fits for
the patch in that vertex.  
Adding $C_4$ and its neighborhood in the black vertex we
obtain the patch shown in Fig.~\ref{pi408}(a).
\begin{figure}[ht]
\begin{center}
\includegraphics[scale=1]{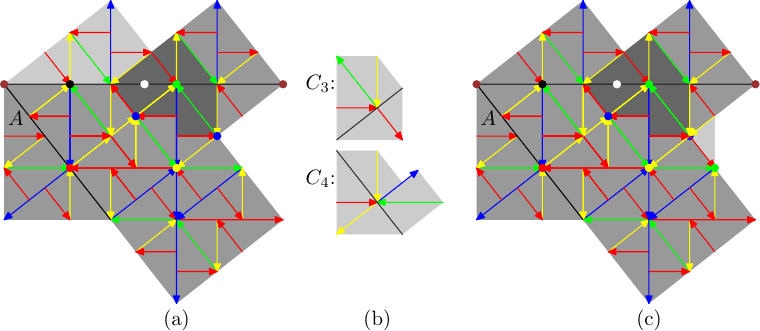}
\end{center}
\caption{Composition of the star $C_4$: case 2 (continued)}\label{pi408}
\end{figure}
We have shown that our tiling $T$
includes all the tiles from Fig.~\ref{pi412}(b) 
except for both triangles labeled by letter $B$ and one triangle labeled by letter $A$.
We have also shown that the tiling
$T$ includes the image of the initial star 
under the inversion through the white point.
Via central symmetrical arguments we can prove 
that $T$ includes also the other triangle labeled 
by letter $A$ in Fig.~\ref{pi412}(b).
It remains to show that $T$ includes both triangles labeled by 
$B$.

To this end look at the blue vertex on the right  in Fig.~\ref{pi408}(a).
Only the stars $C_3$ and $C_4$, shown in Fig.~\ref{pi408}(b),
fit for the patch in that vertex. If the star centered at the blue vertex is
$C_3$, we obtain the patch shown in Fig.~\ref{pi408}(c),
and no legal star fits for it
in the green vertex on the right.

In the remaining case the star centered at the blue vertex is  $C_4$,
and we get the patch shown in Fig. \ref{pi409}(a).
\begin{figure}[ht]
\begin{center}
\includegraphics[scale=1]{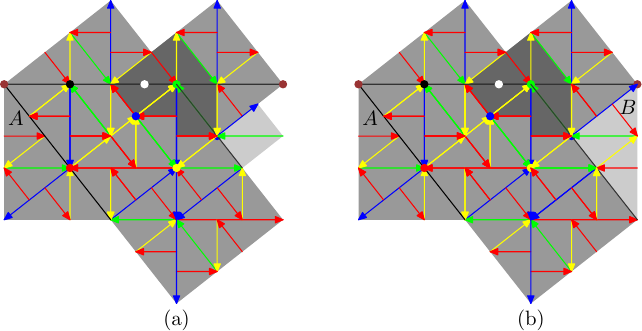}
 \end{center}
 \caption{Composition of the star $C_4$: case 2 (the end)}\label{pi409}
 \end{figure}
Adding its neighborhood, we get the patch that includes
the sought triangle $B$ (see Fig.~\ref{pi409}(b)). Via central symmetrical arguments we can prove 
that $T$ includes also the other triangle labeled 
by letter $B$.
We have reached our goal: we have proved
that the tiling $T$ includes the patch shown in Fig.~\ref{pi412}(b).

\newpage
(c) 
 For the sake of contradiction assume that an L-tiling includes the patch shown in Fig.~\ref{pi603}(a).
\begin{figure}[ht]
\begin{center}
\includegraphics[scale=1]{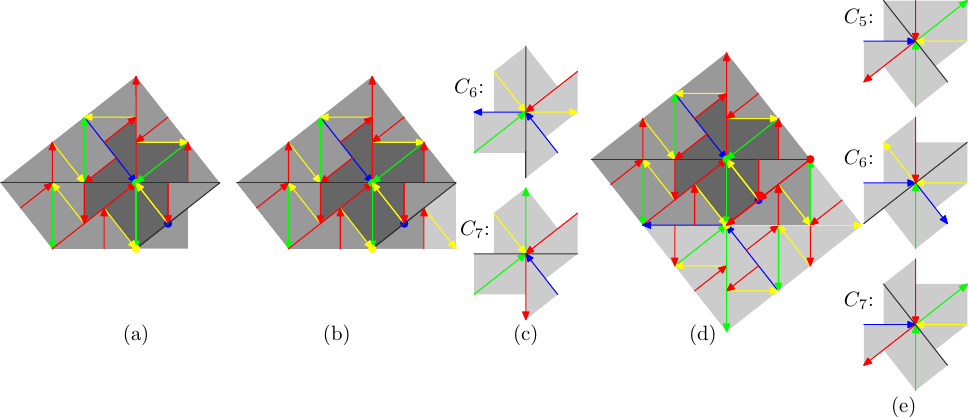}
\end{center}
\caption{Composition of the star $C_7$ (continued)} \label{pi603}
\end{figure}
First add the neighborhood of the star  $C_2$ centered at the blue vertex, we obtain Fig.~\ref{pi603}(b).
Only the stars
$C_6$ and 
$C_7$, shown on 
Fig.~\ref{pi603}(c), fit for the resulting patch
in the yellow vertex (on the bottom). However the star $C_7$
cannot be there, since its vertical green arrow has the non-matching
orientation.
Hence the star of the yellow vertex is $C_6$. Fig.~\ref{pi603}(d)
shows the patch which is obtained by adding that star
and its  neighborhood.
Now look at the red vertex on the right.
Only the stars  $C_5,C_6,C_7$ (Fig.~\ref{pi603}(e))
fit for the patch in that vertex. In all the three cases
there is a horizontal blue arrow that goes into
the red vertex. That arrow lies on the axis
of the initial star. 

Now we know the
color and orientation of that axis  (see Fig.~\ref{pi604}(a)).
\begin{figure}[ht]
\begin{center}
\includegraphics[scale=1]{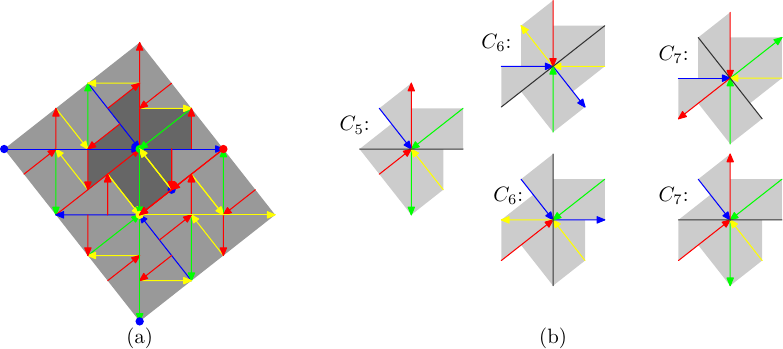}
\end{center}
\caption{Composition of the star $C_7$ (continued)}\label{pi604}
\end{figure}
We can find now the stars
in the leftmost and the bottommost
vertices (both are colored  blue).
Indeed, only the stars $C_5,C_6,C_7$
fit for the patch in the bottommost vertex,
and two latter stars
fit in two ways (see Fig.~\ref{pi604}(b)).
In four cases the vertical arrow has  
the red (and not green) color. Hence those cases are impossible
and only the lower star $C_6$ can stand in the
bottommost blue vertex.

A similar situation occurs in
the leftmost blue vertex: the  stars $C_3,C_4, C_5,C_6,C_7$ fit for the patch there
(two of them fit in two ways, see  Fig.~\ref{pi614}(a)).
\begin{figure}[ht]
\begin{center}
\includegraphics[scale=1]{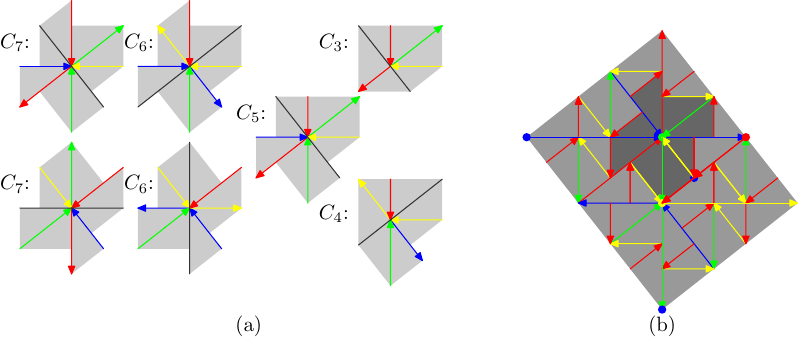}
\end{center}
\caption{Composition of the star $C_7$ (continued)}\label{pi614}
\end{figure}
However only one star, the lower $C_7$,
can have the matching color (blue) of the horizontal arrow.

Adding to the patch the stars in the blue
vertices and adding then their neighborhoods,
we obtain the patch shown in Fig.~\ref{pi605}(b).
\begin{figure}[ht]
\begin{center}
\includegraphics[scale=1]{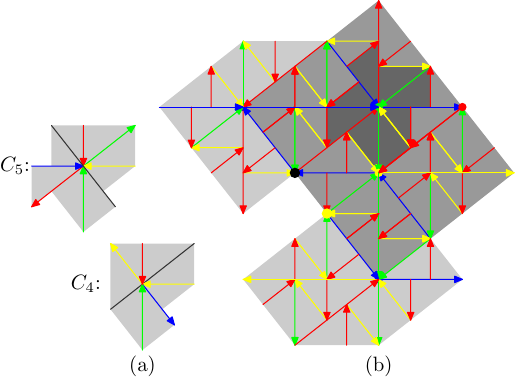}
\end{center}
\caption{Composition of the star $C_7$  (the end)}\label{pi605}
\end{figure}%
Two stars, $C_4$ and $C_5$, fit for the patch in the yellow vertex
(Fig.~\ref{pi605}(a)). In either case the star at the adjacent black vertex
would be illegal. If the star in the yellow vertex is $C_4$, then it has a
yellow arrow pointing to the north-west, into the black vertex; this arrow is continued
by a blue arrow, thus both arrows
lie on the axis of the star at the black vertex and make that axis two-colored. If the star in the yellow vertex is $C_5$, then,
after we add it, a blue arrow goes out of the black vertex to the south-east;
a quick look at the list of legal stars reveals that no legal star has two equally colored arrows
pointing in opposite directions.
In both  cases no legal
star can stand at the black vertex.
We have considered all the cases. The claim is proved.

\section{To cut out of paper}

\begin{figure}[ht]
       \begin{center}
\includegraphics[scale=1]{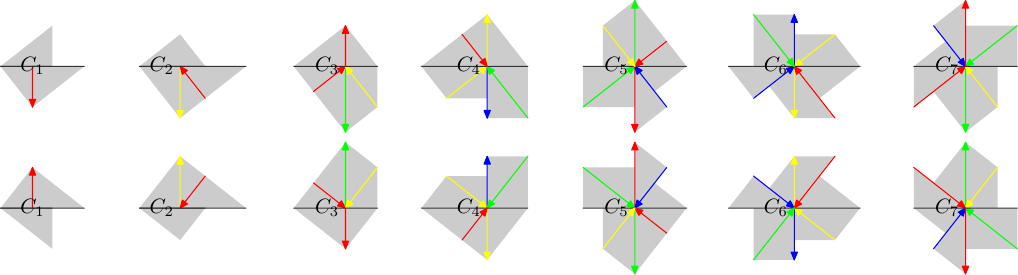}
        \end{center}
  \caption{Legal stars. 
The stars in the second row are reflections of the stars in the first row.
It is not necessary to use the color print, since labels on sides are used only
 in one place of the proof  (in the very end).}\label{pic32}
 \end{figure} 
 
  \begin{figure}[ht]
       \begin{center}
\includegraphics[scale=1]{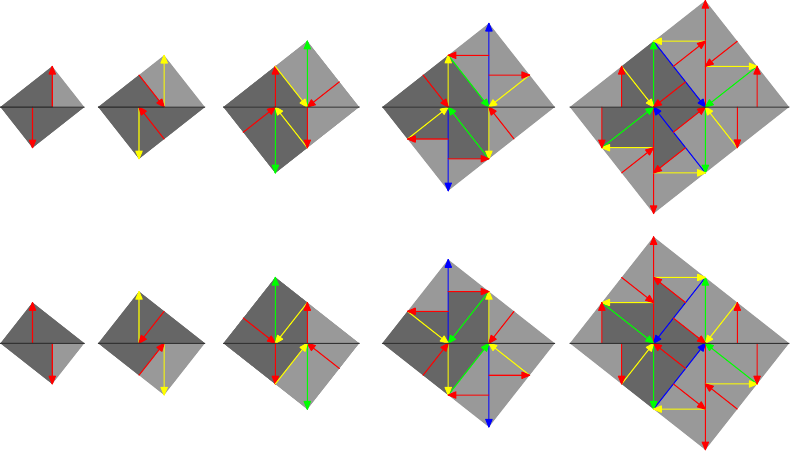}
        \end{center}
  \caption{The neighborhood of the first five stars.}\label{stars-neccessary10}
 \end{figure} 
 
 \newpage
  \begin{figure}[ht]
 \begin{center}
\includegraphics[scale=1]{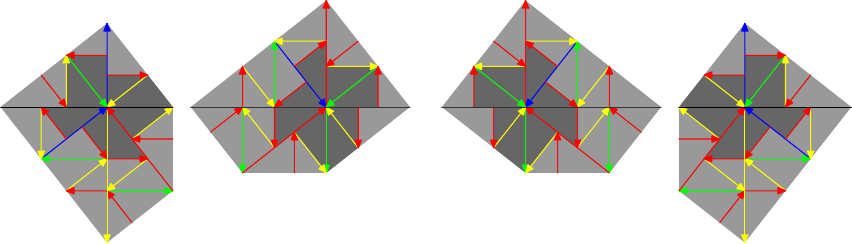}
\end{center}
    \caption{The neighborhoods of the stars
$C_6$, $C_7$.}\label{stars-neccessary1}
 \end{figure}
 
\end{document}